\theoremstyle{plain}
\newtheorem{theorem}{Theorem}
\newtheorem{corollary}[theorem]{Corollary}
\newtheorem{lemma}[theorem]{Lemma}
\newtheorem{conjecture}[theorem]{Conjecture}
\theoremstyle{definition}
\newtheorem{definition}[theorem]{Definition}
\newtheorem{example}[theorem]{Example}
\theoremstyle{remark}
\newcommand{\name}[1]{\noindent {\it #1}\medskip}
\begin{document}

\section*{Type B Set partitions, an analogue of restricted growth functions}

  \name{Amrita Acharyya}\\
  Department of Mathematics and Statistics, University of Toledo, Ohio 43606, USA\\
  *Correspondence: amrita.acharyya@utoledo.edu
    
\begin{abstract}
	In this work, we study type B set partitions for a given specific positive integer $k$ defined over $\langle n\rangle=\{-n, -(n-1),\cdots -1,0,1,\cdots n-1,n\}$. We found a few generating functions of type B analogue for some of the set partition statistics defined by Wachs, White and Steingrimsson for partitions over positive integers $[n] =\{1,2,\cdots n\}$, both for standard and ordered set partitions respectively. We extended the idea of restricted growth functions utilized by Wachs and White for set partitions over $[n]$, in the scenario of $\langle n\rangle$ and called the analogue as Signed Restricted Growth Function (SRGF). 
   We discussed analogues of major index for type B partitions in terms of SRGF. We found an analogue of Foata bijection and reduced matrix for type B set partitions as done by Sagan for set partitions of $[n]$ with sepcific number of blocks $k$.  We conclude with some open questions regarding the type B analogue of some well known results already done in case of set partitions of $[n]$.\\
   
   Key words: q-analogue, signed set partitions, Stirling number, generating functions, restricted growth functions\\
   2023 Mathematics Subject Classification: 05A05, 05A15, 05A18, 05A19
   
\end{abstract}
\section{Introduction}
In the preliminary section, we initially describe four fundamental statistics introduced by Wachs and White \cite{MR2445243} using the technique of restricted growth functions as in \cite{MR1544457}, where the number of blocks $k$ of the set partition of $[n]$ turns out to be the maximal letter in the restricted growth function as observed by Sagan in \cite{MR1644459}.
In the preliminary section, we initially describe the type-B analogue of ten set partition statistics over $\langle n\rangle$, which were defined by Steingrimsson over $[n]$ in case of standard and ordered both type of set partitions. We found the generating functions of the type B analogue of some of the set partition statistics defined over $\langle n\rangle$ which were defined by Steingrimsson over $[n]$, for any specified number of blocks $k$ in terms of $q$-Stirling numbers of the second kind. Stirling numbers of both kinds have been extensively studied in combinatorics and have interesting applications in algebra and geometry. However,
$q$-Stirling numbers in type B have appeared sporadically in the literature over the last several decades.
In \cite{MR1644458} Sagan and Swanson worked on various statistics over signed or type B partition using type B $q$-Stirling number of second kind.  Haglund, Rhoades, and Shimozono \cite {MR1644460} showed that there is a connection between ordered set partitions, generalized coinvariant algebras, and
the Delta Conjecture. In
related work, Zabrocki \cite {MR1644461} conjectured that the tri-graded Hilbert series
4
of the type A superdiagonal coinvariant algebra has coefficients which are
the ordered $q$-Stirling number of the second kind. Swanson and Wallach \cite {MR1644469} made a corresponding conjecture
in type B. This led them to conjecture that an alternating sum involving
these ordered Stirling numbers equals one. Sagan and Swanson proved that conjecture in \cite{MR1644458}.

There is a bijection between the set partitions of $[n]$ in standard form and restricted growth functions (RGF). Wachs and White defined four fundamental statistics on those RGFs. In the final section of this paper, we defined an analogue of restricted growth function in case of type B set partitions of $\langle n\rangle$  and called it as Signed Restricted Growth Function. We found a similar kind of bijection between set partitions of $\langle n\rangle$ and SRGF.
In \cite{MR1644457} Steingrimsson defined ten set partition statistics over the set partitions of $[n]$. Four of these were already defined by Wachs and White as above in case of standard set partitions, and their treatment was in terms of restricted growth functions, a different way of representing partitions in standard form only. Another four statistics
are mirror images of the aforementioned ones. The last two statistics, essentially defined by Foata
and Zeilberger for permutations, are in fact each equal to the difference of two
of the first eight statistics. 
In \cite{MR1644459} Sagan has shown that the Foata bijection 
interchanging inversion and major index for permutations also has a version for partitions of $[n]$. 
In section 3, we discussed an analogue of Foata bijection and using SRGF we have shown that is interchanging the inversion and major index for type B partitions over $\langle n\rangle$. 
In \cite{MR1644459} Sagan has given an interpretation of major index for set partitions of $[n]$ using reduced matrices.
 Here we discussed an analogue of such matrices for type B set partitions and have shown that the analogue of reduced matrix is preserving the major index as done by Sagan for set partitions of $[n]$. Utilizing the idea of two inversion vectors for RGF as done in \cite{MR1644459}, we discussed  eight vectors for SRGF corresponding to the type B analogue of Steingrimsson's statistics. 
\section*{Preliminary[1]}
\begin{definition}\cite {MR1544457} A restricted growth function (RGF) is a sequence $w = a_{1} ...a_{n}$ of positive integers subject to the
	restrictions 
	\begin{enumerate}
		\item $a_{1} =1$. 
		\item For $i\geq 2$,  $a_{i} \leq 1+max\{a_{1},...,a_{i-1}\}$
	\end{enumerate}

 \end{definition}
In \cite {MR1544457} a partition of $[n]$ is written as $\sigma = B_{1}/\cdots /B_{k}\vdash S$ where the subsets $B_{i}$ are called blocks. We use the notation $\Pi_{n} =\{\sigma : \sigma \vdash [n]\}$.
In order to connect set partitions with the statistics of Wachs and White, they are converted into restricted growth functions as in \cite {MR1544457} and . That requires the elements of $\Pi_{n}$ in standard form.
\begin{definition}
	We say $\sigma = B_{1}/\cdots /B_{k} \in \Pi_{n}$ is in standard form if min $B_{1} < \cdots <$ min $B_{k}$. Thus it follows that min $B_{1} =1$.
\end{definition}

We assume all partitions in $\Pi_{n}$ are written in the standard form. Associate with $\sigma\in\Pi_{n}$ the word $w(\sigma) = a_{1}\cdots a_{n}$ where $a_{i} =j$ if and only if $i\in B_{j}$. For example  $w(16/23478/5) =12223122.$
Let, $\Pi_{n,k}$ be the set of all words in $\Pi_{n}$ with exactly $k$ many blocks. $R_{n} =\{w : w$ is an RGF of length $n\}$. Let, $R_{n,k} =\{w : w$ is an RGF of length $n$ with maximal letter $k\}$. 

The four statistics of Wachs and White are denoted as $lb, ls, rb$ and $rs$ where \lq\lq l\rq\rq stands for \lq\lq left\rq\rq, \lq\lq r\rq\rq stands for \lq\lq right\rq\rq, \lq\lq b\rq\rq stands for \lq\lq bigger\rq\rq, and \lq\lq s\rq\rq stands for \lq\lq smaller\rq\rq. The left-bigger statistic is described and the other three should become clear by analogy. Given a word $w =a_{1}\cdots  a_{n}$  define $lb(a_{j}) =\#\{a_{i} : i < j$ and $a_{i} > a_{j}\}$.
 It is important to note that, the cardinality of a set is taken, so if there are multiple copies of such an integer then it is only counted once. Also, clearly $lb(a_j)$ depends on the word containing $a_j$, not just $a_j$ itself. By way of example, if $w =12332412$, then $lb(a_7) =3$. Define $lb(w) =lb(a_1)+\cdots +lb(a_n)$. Continuing the above example, $lb(12332412) =0+0+0+0+1+0+3+2=6$. To simplify notation, $lb(\sigma)$ is taken instead of more cumbersome $lb(w(\sigma))$. Accordingly, $ls(\sigma), rb(\sigma), rs(\sigma)$ are defined.

 Now let, $O\Pi_{n}$ be the set of all  ordered partitions of $[n]$ (that means the set partitions are not necessarily in standard form).
 Let, $O\Pi_{n,k}$ be the set of all words in $O\Pi_{n}$ with exactly $k$ blocks.

In order to define the ten statistics, Steingrimsson first defined the openers and closers of the blocks for any $\pi \in  O\Pi_{n,k}$. The opener of a block is its least element and the closer is its greatest element.

\begin{definition}[Steingrimsson]
	Given a partition $\pi \in O\Pi_{n,k}
	$ let open $\pi$ and clos $\pi$ be the set of
	openers and closers, respectively, of $\pi$. Let, block(i) be the number (counting from
	the left) of the block containing the letter i. Eight coordinate statistics
	are defined as follows:
	\begin{enumerate}
	\item $ros_{i}\pi$ =  $\# \{j | i > j, j \in open \pi, block(j) > block(i)\},$
	\item $rob_{i}\pi$ =  $\#\{j | i < j, j \in open \pi, block(j) > block(i)\},$
	\item $rcs_{i}\pi$ =  $\#\{j | i > j, j \in clos \pi, block(j) > block(i)\},$
	\item $rcb_{i}\pi$ =  $\#\{j | i < j, j \in clos \pi, block(j) > block(i)\},$
	\item $los_{i}\pi$ =  $\#\{j | i > j, j \in open \pi, block(j) < block(i)\},$
	\item $lob_{i}\pi$ = $\#\{j | i < j, j \in open \pi, block(j) < block(i)\},$
	\item $lcs_{i}\pi$ =  $\#\{j | i > j, j \in clos \pi, block(j) < block(i)\},$
	\item $lcb_{i}\pi$ =  $\#\{j | i < j, j \in clos \pi, block(j) < block(i)\}$.
\end{enumerate}
	The hash tags denote the cardinalities of the corresponding sets. Moreover, let $rsb_i$ be the number of blocks $B$, to the right of the block containing
	$i$ such that the opener of $B$ is smaller than $i$ and the closer of $B$ is greater than
	$i$ (rsb is an abbreviation for right, smaller, bigger). Also $lsb_i$ is defined in an
	analogous way, with right replaced by left.
	Set 	\textbf{$ros \pi = 
	\sum_{i}ros_{i} \pi$}
	and likewise for the remaining nine statistics, i.e. each of $rob, rcs, rcb, los, lob, lcs,
	lcb, rsb, lsb$ is defined to be the sum over all $i$ of the respective coordinate statistics.
\end{definition}
		 Defining $ROS\pi = ros\pi + \binom{k}{2}$ (and $RCB, LOS, LCB$ similarly), we have  \begin{theorem}(Steingrimsson)\cite{MR1644457} 
	 	$ROS, RCB, LOS, LCB$ are Euler-Mahonian on ordered partitions, that is, 
	 	$\sum_{\pi\in O\Pi_{n,k}}q^{ROS\pi}
	 	= [k]! S_{q}(n, k)$ and the same for the other three Statistics.
	 \end{theorem}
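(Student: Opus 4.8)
The plan is to prove the identity for $ROS$ by induction on $n$ and then to deduce the statements for $RCB$, $LOS$, $LCB$ by symmetry. Recall that the $q$-Stirling numbers satisfy $S_q(n,k) = q^{\,k-1}S_q(n-1,k-1) + [k]\,S_q(n-1,k)$ with $S_q(0,0)=1$, where $[k] = 1+q+\cdots+q^{k-1}$. Multiplying through by $[k]!$ and writing $T(n,k) := [k]!\,S_q(n,k)$ turns this into
\[
T(n,k) \;=\; [k]\,T(n-1,k) \;+\; q^{\,k-1}\,[k]\,T(n-1,k-1),
\]
so it suffices to show that $F(n,k) := \sum_{\pi\in O\Pi_{n,k}} q^{ROS\,\pi}$ obeys the same recursion with the same initial conditions $F(0,0)=1$ and $F(n,k)=0$ for $k=0$ or $k>n$.

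The engine is the usual "insert the largest letter" decomposition: for $n\ge 1$, deleting $n$ from $\pi\in O\Pi_{n,k}$ shows that $\pi$ arises in exactly one of two ways — (a) by inserting the letter $n$ into one of the $k$ blocks of a unique $\pi'\in O\Pi_{n-1,k}$, or (b) by inserting the new singleton block $\{n\}$ into one of the $k$ available slots of a unique $\pi'\in O\Pi_{n-1,k-1}$. Because $n$ is larger than every other letter, it is never counted in $ros_i\,\pi$ for $i\neq n$ (this quantity records only openers $j<i$), and neither operation disturbs the left-to-right order of the old blocks or their sets of openers; hence $ros_i\,\pi = ros_i\,\pi'$ for every $i\neq n$. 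The only new term is $ros_n\,\pi$, which counts the openers lying in blocks strictly to the right of the block containing $n$, and each such block contributes exactly its opener (automatically smaller than $n$).

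Now compute the two cases. In case (a) the number of blocks is unchanged, so $\binom{k}{2}$ is unchanged; if $n$ is inserted into the block in position $t$ (for $1\le t\le k$) then $ros_n\,\pi = k-t$, so $ROS\,\pi = ROS\,\pi' + (k-t)$, and summing over $t=1,\dots,k$ contributes the factor $1+q+\cdots+q^{k-1}=[k]$. In case (b) the block count rises from $k-1$ to $k$, so $\binom{k}{2}-\binom{k-1}{2}=k-1$ is added to the constant; if $\{n\}$ is placed in position $s$ (for $1\le s\le k$) then $ros_n\,\pi = k-s$, so $ROS\,\pi = ROS\,\pi' + (k-1) + (k-s)$, and summing over $s=1,\dots,k$ contributes $q^{\,k-1}[k]$. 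Collecting the two cases gives $F(n,k) = [k]\,F(n-1,k) + q^{\,k-1}[k]\,F(n-1,k-1)$, which is the recursion for $T(n,k)$; the base cases are immediate, so induction yields $\sum_{\pi\in O\Pi_{n,k}}q^{ROS\,\pi} = [k]!\,S_q(n,k)$.

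For the remaining three statistics I would avoid redoing the computation and instead use two involutions of $O\Pi_{n,k}$: reversing the order of the blocks interchanges \emph{left} and \emph{right}, hence carries $ros$ to $los$ and $rcb$ to $lcb$; complementing each letter by $i\mapsto n+1-i$ interchanges \emph{opener}/\emph{closer} and \emph{smaller}/\emph{bigger}, hence carries $ros$ to $rcb$ and $los$ to $lcb$. Both maps fix the number of blocks, hence fix the additive constant $\binom{k}{2}$, so $ROS$, $RCB$, $LOS$, $LCB$ are equidistributed over $O\Pi_{n,k}$ and the displayed identity for each of the other three follows from the case of $ROS$. The one step I expect to need genuine care is the invariance claim $ros_i\,\pi = ros_i\,\pi'$ for $i\neq n$: in case (b) the letter $n$ does become a genuine new opener, and the point is that it is too large to be counted by any $ros_i$ with $i\neq n$, so no coordinate statistic except $ros_n$ changes; everything else is bookkeeping.
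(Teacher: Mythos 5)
Your argument is correct: the insertion-of-the-largest-letter induction gives exactly the recursion $F(n,k)=[k]F(n-1,k)+q^{k-1}[k]F(n-1,k-1)$ satisfied by $[k]!\,S_q(n,k)$, and the two involutions (block reversal and complementation $i\mapsto n+1-i$) correctly transfer the result to $LOS$, $RCB$, $LCB$. The paper itself quotes this theorem from Steingr\'imsson without proof, but your approach is essentially the same delete-$n$/insert-$n$ induction that the paper explicitly borrows from Steingr\'imsson's Theorem 4 when proving its own type B analogues (Theorems 8 and 9), so there is nothing to flag.
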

	 where $[k]!= [k][k-1][k-2]\cdots[1]$ with $[k] = [k]_{q}$ (we drop the $q$ from the suffix to make the notation simpler)$= 1+q+q^{2}+q^{3}+\cdots q^{k-1}$ and $S_{q}(n, k)$ is the $q$-Stirling numbers of second kind which can be described as in Lemma 1 in \cite{MR1644457}, $S_{q}(n, k) = q^
	 {k-1}
	 · S_{q}(n-1, k-1) + [k] · S_{q}(n-1, k)$.
	 
	 \section{Steingrimsson's Statistics for type B partitions and some generating functions in terms of $q$-Stirling numbers[2]}

\begin{definition}\cite{MR1644458}
The type B Stirling numbers of the second kind are defined	by the following recurrence relation:\\
$S_{B}(n, k) = S_{B}(n-1, k-1) + (2k + 1)S_{B}(n-1, k)$ and $S_{B}(0, k) = \delta_{0,k}$ (Kronecker
delta) The ordered version of $S_{B}(n, k)$ is $S^{o}_{B}(n,k)
 = (2k)!!S_{B}(n, k)$
\end{definition}

\begin{definition} \cite{MR1644458}
	The type B $q$-Stirling numbers of the second kind are defined by replacing the above recurrence relation with\\
	$S_{B}[n, k] = S_{B}[n-1, k-1] + [2k + 1]S_{B}[n-1, k]$.
	The ordered version of $S_B[n, k]$ is
	$S^{o}_{B}[n,k]
	= [2k]!!S_{B}[n, k]$
\end{definition}
	  where $[k]!!=[k][k-2][k-4]\cdots$ ending at $[2]$ or $[1]$ depending on $k$ is even or odd respectively.
	
\begin{definition} (Sagan and Swanson)\cite{MR1644458}
	A signed or type B partition is a partition of the set $\langle n \rangle = \{-n,\cdots -1,0,1,\cdots n\}$ of
	the form $\rho = S_{0}/S_{1}\cdots /S_{k}$, (We write
	$\rho \vdash_{B} \langle n \rangle $)
	satisfying
	\begin{enumerate}
	\item [1] $0\in S_{0}$ and if $i \in S_{0}$,  then $\bar{i} \in S_{0}$, and
	\item [2] for $i \geq 1$ we have $S_{2i} = -S_{2i-1}$,
	where $-S = \{-s|s\in S\}$. \\ 
	\end{enumerate}
\end{definition}	
Let $|S| = \{|s| : s \in S\}$, so that $|S_{2i}|
 = |S_{2i-1}|$ for $i \geq 1$. For all $i$ we let
$m_{i} = min |S_{i}|$. Let $S_B(\langle n \rangle, k)$ denote the set of all type B partitions of $\langle n \rangle$
with $2k + 1$ blocks in standard form.
We will always write signed partitions in standard form which means that
\begin{enumerate}
	\item [3] $m_{2i} \in S_{2i}$ for all $i$, and
	\item [4] $0 = m_{0} < m_{2} < m_{4} <\cdots < m_{2k}$
	
\end{enumerate}
\begin{definition}(Sagan and Swanson) \cite{MR1644458} An inversion of $\rho \vdash_{B} \langle n \rangle $ written in Standard form is a pair $(s, S_{j})$ satisfying
\begin{enumerate}
	\item [1] $s\in S_{i}$ for some $i<j$ and 
	\item[2] $s\geq m_{j}$
\end{enumerate}
Let Inv $\rho$ be set of inversions of $\rho$ and inv $\rho = \# Inv {\rho}$
\end{definition}
\begin{theorem}(Sagan and Swanson)\cite{MR1644458}
$S_{B}[n,k]= \sum_{\rho \in S_{B}(\langle n \rangle , k) } q^{inv{\rho}}$
\end{theorem}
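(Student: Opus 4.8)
The plan is to induct on $n$ and show that $I_{n,k}(q):=\sum_{\rho\in S_B(\langle n\rangle,k)}q^{\mathrm{inv}\,\rho}$ satisfies the defining recurrence of $S_B[n,k]$. The base case $n=0$ is immediate: $\langle 0\rangle=\{0\}$ admits the single type B partition $S_0=\{0\}$, which has $2\cdot 0+1$ blocks and no inversions, so $I_{0,k}(q)=\delta_{0,k}=S_B[0,k]$.

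For the inductive step I would classify each $\rho\in S_B(\langle n\rangle,k)$ by what happens when $n$ and $-n$ are removed from their blocks and any emptied block is discarded, obtaining a type B partition $\rho'$ of $\langle n-1\rangle$. Letting $S_j$ be the block of $\rho$ containing $n$ (so $-n$ lies in the block forced by the symmetry rules), the conditions $0=m_0<m_2<\cdots<m_{2k}$ and $m_{2i}\in S_{2i}$ force exactly one of two situations. In \emph{Case A} the removal of $\pm n$ leaves every block nonempty; then $\rho'\in S_B(\langle n-1\rangle,k)$, and $\rho$ is reconstructed from $\rho'$ by choosing one of its $2k+1$ blocks $S'_i$ to receive $n$, with $-n$ then forced into the partner block, and one checks that every such insertion produces a legitimate element of $S_B(\langle n\rangle,k)$ in standard form; this sets up a bijection between $S_B(\langle n\rangle,k)$-members in Case~A and pairs (element of $S_B(\langle n-1\rangle,k)$, chosen block index). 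In \emph{Case B} the removal of $\pm n$ empties exactly two blocks; a short argument (an odd-indexed singleton $\{n\}$ would violate $m_{2i}\in S_{2i}$, the block $S_0$ always contains $0$, and an even-indexed singleton $\{n\}$ must be the last pair by the ordering of the $m_{2i}$) shows these blocks are $S_{2k-1}=\{-n\}$ and $S_{2k}=\{n\}$, so $\rho'\in S_B(\langle n-1\rangle,k-1)$ and $\rho$ is recovered uniquely by appending the blocks $\{-n\}$ and $\{n\}$.

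The core of the argument will be the inversion count. Since $n$ is the largest element and $-n$ the smallest, inserting $\pm n$ into (or appending) blocks changes no $m_j$, so every inversion of $\rho'$ persists in $\rho$ and the only new inversions are those whose first coordinate is $n$ or $-n$. Every inversion $(s,S_j)$ has $j\ge 1$, hence $m_j\ge 1$ and $s\ge m_j>0$; thus the negative element $-n$ never creates an inversion, while $n\ge m_j$ always, so $n$ sitting in $S_i$ creates precisely the inversions $(n,S_j)$ for $j>i$, i.e.\ $2k-i$ of them. In Case B this number is $0$, giving $\mathrm{inv}\,\rho=\mathrm{inv}\,\rho'$. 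In Case A, as the index $i$ of the block containing $n$ runs through $0,1,\dots,2k$ the number of new inversions runs through $2k,2k-1,\dots,0$, each occurring once, so summing over the $2k+1$ reconstructions of a fixed $\rho'$ multiplies $q^{\mathrm{inv}\,\rho'}$ by $q^{2k}+q^{2k-1}+\cdots+1=[2k+1]$. Collecting the two cases and applying the inductive hypothesis gives
\[
I_{n,k}(q)=I_{n-1,k-1}(q)+[2k+1]\,I_{n-1,k}(q)=S_B[n-1,k-1]+[2k+1]S_B[n-1,k]=S_B[n,k].
\]

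The part I expect to demand the most care is verifying, in Case A, that each of the $2k+1$ insertions truly yields a standard-form type B partition — in particular that inserting $n$ into an odd-indexed block $S'_{2i-1}$ keeps $m_{2i}=\min|S'_{2i}|$ both unchanged and an element of $S_{2i}$, which uses that $\rho'$ has that block nonempty and is precisely why the degenerate configuration is quarantined in Case B — together with checking that the Case A/Case B dichotomy is exhaustive and that the emptied pair in Case B is forced to be the last. Once those structural points are settled, the inversion accounting above is routine.
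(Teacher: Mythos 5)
Your argument is correct and is essentially the standard proof: the paper itself only cites this theorem from Sagan and Swanson, but both their original proof and this paper's proofs of its analogous $los_{B'}$ generating-function theorems use exactly your induction of removing $n$ and $-n$, splitting into the singleton-pair case (forced to be $S_{2k-1}=\{-n\}$, $S_{2k}=\{n\}$, contributing $S_B[n-1,k-1]$) and the insertion case (contributing $(1+q+\cdots+q^{2k})S_B[n-1,k]=[2k+1]S_B[n-1,k]$). Your inversion accounting, including the observation that $-n$ and the new blocks create no inversions, matches the referenced argument.
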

\begin{definition}
	An ordered signed partition of $\langle n \rangle$ is a sequence $\omega =\\
	(S_0/S_1/S_2/ . . . /S_{2k})$ satisfying the first two conditions in the definition of signed or type B partition. Note
	that no assumption is made about standard form. The set of such partitions
	with $2k + 1$ blocks is denoted as $S^{o}_{B}
	(\langle n \rangle, k)$.
	The definition of inversion remains unchanged.
\end{definition}  
\begin{theorem}(Sagan and Swanson)\cite{MR1644458}
	For $n,k \geq 0, S^{o}_{B}[n,k]= \sum_{\omega \in S^{o}_{B}(\langle n \rangle, k)}q^{inv {\omega}}$
\end{theorem}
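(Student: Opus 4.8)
The plan is to prove the identity by induction on $n$, showing that $F_{n,k}(q):=\sum_{\omega\in S^{o}_{B}(\langle n\rangle,k)}q^{\operatorname{inv}\omega}$ satisfies the same recurrence and initial conditions as $S^{o}_{B}[n,k]$. First I record the recurrence for the right-hand side: combining $S^{o}_{B}[n,k]=[2k]!!\,S_{B}[n,k]$, the defining recurrence $S_{B}[n,k]=S_{B}[n-1,k-1]+[2k+1]S_{B}[n-1,k]$, and the factorization $[2k]!!=[2k]\cdot[2k-2]!!$, one obtains
\begin{equation*}
S^{o}_{B}[n,k]=[2k]\,S^{o}_{B}[n-1,k-1]+[2k+1]\,S^{o}_{B}[n-1,k],
\end{equation*}
with the first term understood to be absent when $k=0$. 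Since $S^{o}_{B}[0,k]=[2k]!!\,\delta_{0,k}=\delta_{0,k}$, and, directly from the definitions (blocks being nonempty), $S^{o}_{B}(\langle 0\rangle,0)=\{(\{0\})\}$ while $S^{o}_{B}(\langle 0\rangle,k)=\varnothing$ for $k\ge 1$, it suffices to show that $F_{n,k}$ obeys the displayed recurrence for $n\ge 1$.

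For the inductive step, fix $n\ge 1$ and classify $\omega\in S^{o}_{B}(\langle n\rangle,k)$ by the block containing $n$; note $\bar n$ is then forced into $S_{0}$ (if $n\in S_{0}$) or into the conjugate block. In \emph{Case A}, $\{n\}$ is a singleton block; then $\{\bar n\}$ is its conjugate, the two occupy some pair-slot $i\in\{1,\dots,k\}$, and deleting them yields $\omega'\in S^{o}_{B}(\langle n-1\rangle,k-1)$. This deletion is a bijection between Case-A partitions and triples $(\omega',i,p)$, where $p\in\{2i-1,2i\}$ is the index of the block equal to $\{n\}$. Because $|n|=n$ is the largest absolute value occurring, $(n,S_{\ell})$ is an inversion for every block $S_{\ell}$ to the right of $\{n\}$ and for no block to its left, $(\bar n,S_{\ell})$ is never an inversion, and neither $\{n\}$ nor $\{\bar n\}$ contributes a new inversion in its role as a right-hand block; consequently $\operatorname{inv}\omega=\operatorname{inv}\omega'+(2k-p)$. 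Summing $q^{2k-p}$ over $i=1,\dots,k$ and $p\in\{2i-1,2i\}$ runs the exponent over $\{0,1,\dots,2k-1\}$ exactly once, so the Case-A contribution is $[2k]\,F_{n-1,k-1}$. In \emph{Case B}, $n$ lies in a block of size $\ge 2$ or in $S_{0}$; deleting $n$ and $\bar n$ yields $\omega'\in S^{o}_{B}(\langle n-1\rangle,k)$, and for each fixed index $j\in\{0,1,\dots,2k\}$ this is a bijection between the Case-B partitions in which $n$'s block has index $j$ and all of $S^{o}_{B}(\langle n-1\rangle,k)$. The same maximality observation shows removing $n$ lowers $\operatorname{inv}$ by exactly $2k-j$ while removing $\bar n$ changes nothing, so the Case-B contribution is $\sum_{j=0}^{2k}q^{2k-j}\,F_{n-1,k}=[2k+1]\,F_{n-1,k}$. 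Adding the two cases gives the recurrence, completing the induction.

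The only place that needs real care is the inversion bookkeeping, which I would spell out carefully: one must check that deleting (or inserting) $n$ leaves every $m_{\ell}=\min|S_{\ell}|$ unchanged — true because $n$ has maximal absolute value and, in Case A, the whole block $\{n\}$ is removed, so no $m_{\ell}$ ever equals $n$ — and one must avoid double-counting the pair $(n,\{\bar n\})$, which is already included among the inversions $(n,S_{\ell})$ with $\ell$ to the right of $\{n\}$ precisely when $\{n\}$ precedes $\{\bar n\}$. Once those points are settled, the counts $2k-p$ and $2k-j$ are forced and the collapse to $[2k]$ and $[2k+1]$ is immediate. An alternative route, using the standard-form theorem $S_{B}[n,k]=\sum_{\rho}q^{\operatorname{inv}\rho}$ as a black box, is to group the ordered partitions by their standardization — sort the $k$ conjugate pairs into increasing order of minimal absolute value and normalize each pair's orientation — and show each fiber contributes $[2k]!!\,q^{\operatorname{inv}\rho}$; but that fiberwise analysis is essentially the same inversion computation reorganized, so I would present the induction.
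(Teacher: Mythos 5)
Your proof is correct: the recurrence $S^{o}_{B}[n,k]=[2k]\,S^{o}_{B}[n-1,k-1]+[2k+1]\,S^{o}_{B}[n-1,k]$ is derived properly, the two cases (singleton pair-slot $\{\bar n\}/\{n\}$ versus $n$ in a larger block or $S_0$) are exhaustive, and the inversion bookkeeping --- including the point that $(n,\{\bar n\})$ is an ordinary inversion when $\{n\}$ precedes $\{\bar n\}$ and that no $m_{\ell}$ changes under deletion --- is handled correctly. The paper itself states this theorem as a cited result of Sagan and Swanson without proof, but your deletion-of-$n$-and-$\bar n$ induction is exactly the technique the paper uses for its own Theorems on $los_{B'}$ (and is essentially the argument in the cited source), so this is the same approach rather than a genuinely different route.
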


Note that defining inversion over an ordered signed partition of $[n]$ in the above way, matches with Steingrimsson's ros, while the same is applied over any $\pi \in O\Pi_{n,k}$.
For example consider $\pi = 47/3/159/68/2 \in O\Pi_{9,5}$. 
inv $\pi =\\ \#\{(4,B_{2}), (4,B_{3}), (4,B_{5}), (7,B_{2}), (7,B_{3}), (7,B_{4}), (7, B_{5}), (3, B_{3}), (3, B_{5}), (5, B_{5}), (9, B_{4})\\, (9, B_{5}), (6, B_{5}), (8, B_{5})\} =14 = ros \pi$.\\
This is the motivation to define in this work nine more statistics over any ordered signed partitions of $\langle n \rangle$, so that they matches with Steingrimsson's nine other statistics accordingly, while the same is applied over any $\pi \in O\Pi_{n,k}$.\\ To do this, we can further define $M_{i}= max|S_{i}|$, like $m_{i}$. For any $\rho \in S^{o}_{B}
(\langle n \rangle, k)$, noting that the negetive elements and $0$ in the type B partition of $\langle n\rangle$ does not contribute in $inv\rho$ we define the following. 

\begin{definition}

\begin{enumerate}
	\item $ros_{B}\pi =\#\{(s,S_{j})| s \in S_{i}$ for some $i<j$ and $s \geq m_{j}\},$
	\item $rob_{B}\pi =\#\{(s,S_{j})| s \in S_{i}$ for some $i<j$ and $s \leq m_{j}$ and $s>0\},$
	\item $rcs_{B}\pi =\#\{(s,S_{j})| s \in S_{i}$ for some $i<j$ and $s \geq M_{j}\},$
	\item $rcb_{B}\pi =\#\{(s,S_{j})| s \in S_{i}$ for some $i<j$ and $s \leq M_{j}$ and $s>0\},$
	\item $los_{B}\pi =\#\{(s,S_{j})| s \in S_{i}$ for some $i>j>0$ and $s \geq m_{j}\},$
	\item $lob_{B}\pi =\#\{(s,S_{j})| s \in S_{i}$ for some $i>j>0$ and $s \leq m_{j}$ and $s>0\},$
	\item $lcs_{B}\pi =\#\{(s,S_{j})| s \in S_{i}$ for some $i>j>0$ and $s \geq M_{j}\},$
	\item $lcb_{B}\pi =\#\{(s,S_{j})| s \in S_{i}$ for some $i>j>0$ and $s \leq M_{j}$ and $s>0\},$
	\item $rsb_{B}\pi =\#\{(s,S_{j})| s \in S_{i}$ for some $i<j$ and $m_{j}\leq s \leq M_{j}\},$
	\item $lsb_{B}\pi =\#\{(s,S_{j})| s \in S_{i}$ for some $i>j>0$ and $m_{j}\leq s \leq M_{j}\}$
\end{enumerate}

\end{definition}
As an example consider the same $\pi$ above. Note that by the above definition $los_{B}\pi = \{(5,B_{1}), (5,B_2), (9,B_{1}), (9, B_{2}), (6, B_{1}), (6, B_{2}), (6, B_{3}), (8, B_{1}), (8,B_{2}), (8, B_{3}), (2, B_{3})\}= 11$, where as by Steingrimsson $los{\pi} = 0+0+0+0+2($corresponding to $(5,B_{1}), (5,B_2))+2($corresponding to $(9,B_{1}), (9,B_2))+3($corresponding to$ (6,B_{1}), (6,B_2), (6, B_{3}))+\\3($
corresponding to$ (8,B_{1}), (8,B_2), (8, B_{3}))+1($corresponding to $(2,B_{3}),)=11$.\\
Note in \cite{MR1644457}, given a partition $\pi$ of $[n]$, let $\pi^{c}$
be the partition obtained by complementing
each of the letters in $\pi$, that is, by replacing the letter $i$ by $n+1-i$. Then it follows that $rcb \pi^{c}
= ros \pi$ and that $rcs \pi^{c}
 = rob \pi$. In order to have similar result for type B partitions in $S^{0}_{B}(\langle n \rangle, k)$, we define the complement of any $\pi \in S^{0}_{B}(\langle n \rangle, k)$ in the following way:
 \begin{definition}
 	For any  $\pi \in S^{0}_{B}(\langle n \rangle, k), \pi^{c}$ is obtained by replacing any positive $i$ by $n+1-i$, and $\bar{i}$ by $\overline{n+1-i}$ and keeping $0$ the same.	
 \end{definition}
 
 Then it follows that $rcb \pi^{c}
 = ros \pi$ and that $rcs \pi^{c}
 = rob \pi$, as because each $(s, S_{j})$ contributing in $ros \pi$ gives $(n+1-s, S_{j})$ contributing in $rcb \pi^{c}$ and conversely.

 Although, as in \cite{MR1644457}  where every right statistic
 is equidistributed with its corresponding left statistic (since reversing the order of the blocks in an ordered partition turns
 a left opener into a right opener and likewise for closers), the exact same is not the case for type B partitions. We have 
 \begin{theorem}
 	 $q^{k(k+1)}S_{B}[n,k]=\sum_{\rho\in S_{B}(\langle n\rangle,k)} q^{los_{B'}\rho}$ where $los_{B'}\rho= los_{B}\rho$ dropping the condition $j>0$ in the definition of $los_{B}\rho$. 	
 \end{theorem}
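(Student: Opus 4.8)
I would prove the identity by induction on $n$, matching both sides against the defining recurrence $S_{B}[n,k]=S_{B}[n-1,k-1]+[2k+1]S_{B}[n-1,k]$. Set $A_{n,k}=\sum_{\rho\in S_{B}(\langle n\rangle,k)}q^{los_{B'}\rho}$. Since $k(k+1)-(k-1)k=2k$, multiplying the $q$-Stirling recurrence through by $q^{k(k+1)}$ gives
\[
q^{k(k+1)}S_{B}[n,k]=q^{2k}\bigl(q^{(k-1)k}S_{B}[n-1,k-1]\bigr)+[2k+1]\bigl(q^{k(k+1)}S_{B}[n-1,k]\bigr),
\]
so it suffices to show $A_{n,k}=q^{2k}A_{n-1,k-1}+[2k+1]A_{n-1,k}$, together with the base case $A_{0,k}=\delta_{0,k}$ (immediate: $S_{B}(\langle 0\rangle,0)$ is the single partition $\{0\}$ with $los_{B'}=0$, and $S_{B}(\langle 0\rangle,k)=\emptyset$ for $k\ge 1$).

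\textbf{Recursive decomposition of $S_{B}(\langle n\rangle,k)$.} Given $\rho\in S_{B}(\langle n\rangle,k)$, delete the letters $n$ and $\bar n$. If $\{n\}$ and $\{\bar n\}$ were singleton blocks then, by the standard-form conditions $m_{2i}\in S_{2i}$ and $m_{0}<m_{2}<\cdots<m_{2k}$, they must be the last pair $S_{2k-1}=\{\bar n\}$, $S_{2k}=\{n\}$, and deleting them yields an element of $S_{B}(\langle n-1\rangle,k-1)$ from which $\rho$ is uniquely recovered. Otherwise the deletion yields some $\rho''\in S_{B}(\langle n-1\rangle,k)$, and $\rho$ is recovered in one of $2k+1$ ways: either by putting $\{n,\bar n\}$ into $S_{0}$, or, for each $i$ with $1\le i\le k$, by adjoining $n$ to $S_{2i}$ and $\bar n$ to $S_{2i-1}$, or $n$ to $S_{2i-1}$ and $\bar n$ to $S_{2i}$ (two distinct standard-form reconstructions per block-pair). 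This recovers the counting recurrence $S_{B}(n,k)=S_{B}(n-1,k-1)+(2k+1)S_{B}(n-1,k)$ and organizes $S_{B}(\langle n\rangle,k)$ as required.

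\textbf{Effect on $los_{B'}$.} The bookkeeping is clean because $n$ exceeds every $m_{j}$ and adjoining $n,\bar n$ never changes any $m_{j}$, so only brand-new pairs must be counted, and $\bar n<0$ can never be the inner letter $s$ of a pair (it fails $s\ge m_{j}\ge 0$). Case by case: appending the final blocks $\{\bar n\},\{n\}$ to $\rho'\in S_{B}(\langle n-1\rangle,k-1)$ creates precisely the $2k$ pairs $(n,S_{j})$, $0\le j\le 2k-1$, raising $los_{B'}$ by $2k$; putting $\{n,\bar n\}$ into $S_{0}$ creates no new pairs; adjoining $n$ to $S_{2i}$ creates the $2i$ pairs $(n,S_{j})$, $0\le j\le 2i-1$; adjoining $n$ to $S_{2i-1}$ creates the $2i-1$ pairs $(n,S_{j})$, $0\le j\le 2i-2$. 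Hence the $2k+1$ reconstructions of a fixed $\rho''$ have $los_{B'}$-values $los_{B'}\rho''+j$ for $j=0,1,\dots,2k$, each occurring exactly once, contributing $[2k+1]\,q^{los_{B'}\rho''}$; summing over all $\rho''\in S_{B}(\langle n-1\rangle,k)$ and all $\rho'\in S_{B}(\langle n-1\rangle,k-1)$ gives $A_{n,k}=q^{2k}A_{n-1,k-1}+[2k+1]A_{n-1,k}$, and the induction closes.

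\textbf{Where the difficulty lies.} The delicate part is the third step — confirming that the new pairs created in each insertion case are exactly those listed and no others, and in particular that the increments obtained from the $2k+1$ extensions of a $(\langle n-1\rangle,k)$-partition form a permutation of $\{0,1,\dots,2k\}$; this is precisely what forces the prefactor $[2k+1]$ rather than some other polynomial, and also what explains why, in contrast with type A, one cannot simply reverse block order to turn $los_{B'}$ into a right statistic. The standard-form hypotheses carry most of the weight here, by pinning a singleton $\{n\}$ to the last block-pair and by guaranteeing the $m_{j}$ are undisturbed; a secondary point worth checking carefully is that the two reconstructions attached to each block-pair really are distinct standard-form partitions, so that the count is $2k+1$ and not $k+1$.
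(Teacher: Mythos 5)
Your proposal is correct and follows essentially the same route as the paper: induction on $n$ via deletion of $n$ and $\bar n$, with the singleton case contributing $q^{2k}$ times the $(n-1,k-1)$ sum and the $2k+1$ insertions into an $(n-1,k)$-partition producing increments $0,1,\dots,2k$ and hence the factor $[2k+1]$. Your write-up is somewhat more explicit than the paper's (e.g., in verifying that the two insertions per block-pair give increments $2i-1$ and $2i$, and that $\bar n$ never contributes a pair), but the decomposition and bookkeeping are identical.
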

 \begin{proof} We follow the idea of the proof of theorem 4 in \cite {MR1644457} and theorem 3.7 in \cite {MR1644458}.
 	The proof follows by induction on $n$.\\
 	Base case: If $n=1$, then there are two possibilities about $k$. $k=0,k=1$. If $k$=0, then $2k+1= 1$ and the only element of $S_{B}(\langle 1\rangle, 0)$ to consider is $01\bar{1}$ which gives the result. If $k=1$, then $2k+1=3$. The only set partition to consider is $0/\bar{1}/1$ giving $los_{B'}$ as 2 and hence giving the result.\\
 	Now suppose the result be true for some $n-1$. 
 	 Given
 	 $\rho \in S_{B}(\langle n \rangle, k)$ we can remove $n$ and $-n$ to obtain a new partition $\rho^{\prime}$.\\
 	  	  	 If $n$ (and thus $-n$) is in a singleton block then $\rho^{\prime}\in
 	  S_{B}(\langle {n-1}\rangle, k-1)$
 	 and there is only one way to construct $\rho$ from $\rho^{\prime}$.
 	 	  Furthermore, in this case
 	 the standardization condition forces $S_{2k-1} = \{-n\}$ and $S_{2k} = \{n\}$ in $\rho$. It follows the $los_{B'}\rho=los_{B'}\rho^{\prime}+2k$. So, by induction such $\rho$ contributes $q^{(k-1)k}.q^{2k}S_{B}[n-1,k-1]=q^{k(k+1)}S_{B}[n-1,k-1]$. If $n$ and $-n$ are in a block with other elements, then $\rho^{\prime}\in S_{B}(\langle n-1 \rangle, k)$ which induces $i$ many $(n,S_{j})$, namely $(n, S_{0}), (n, S_{1}),\cdots (n,S_{i-1})$ elements adding to previous $los_{B'}$ and thus for any such $\rho, los_{B'}(\rho)=los_{B'}(\rho^{\prime})+i$ $\forall i$ with $0\leq i\leq 2k$. Thus the contribution of these $\rho$ are $[2k+1]q^{k(k+1)}S_{B}[n-1,k]$. Hence, we are done.
 	\end{proof} 
 
 \begin{theorem}
 We have $q^{k}S^{0}_{B}[n,k]= \sum_{\rho\in S^{0}_{B}(\langle n\rangle,k)} q^{los_{B'}\rho}$	
 \end{theorem}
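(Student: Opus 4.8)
The plan is to prove the identity by induction on $n$, in close parallel with the proof of the previous theorem (and with Theorem 3.7 of \cite{MR1644458}); the only new feature is that an ordered signed partition is not constrained to standard form, so when we reinsert the largest letter there is more freedom about the position of its block. Write $F(n,k)=\sum_{\omega\in S^{o}_{B}(\langle n\rangle,k)}q^{los_{B'}\omega}$. Since $S^{o}_{B}[n,k]=[2k]!!\,S_{B}[n,k]$, the goal is to show $F(n,k)=q^{k}[2k]!!\,S_{B}[n,k]$, and the algebraic facts that make the induction close are the recurrence $S_{B}[n,k]=S_{B}[n-1,k-1]+[2k+1]\,S_{B}[n-1,k]$ together with the factorization $[2k]!!=[2k]\,[2k-2]!!$. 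For the base case $n=1$ one checks $k=0$ (the unique partition $01\bar{1}$ has $los_{B'}=0$) and $k=1$ (the two partitions $0/\bar{1}/1$ and $0/1/\bar{1}$ have $los_{B'}$ equal to $2$ and $1$, summing to $q[2]=q^{1}S^{o}_{B}[1,1]$).

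For the inductive step I would take $\omega\in S^{o}_{B}(\langle n\rangle,k)$ and delete $n$ and $-n$ to obtain a partition $\omega'$. The first point to establish is the exact change in the statistic. Because $|\pm n|=n$ is maximal, deleting $n,-n$ leaves every value $m_{j}=\min|S_{j}|$ unchanged and does not disturb the left-to-right order of the surviving blocks, so every pair $(s,S_{j})$ counted by $los_{B'}\omega$ with $s\notin\{n,-n\}$ survives in $\omega'$, and conversely. Hence $los_{B'}\omega-los_{B'}\omega'$ is exactly the number of pairs $(s,S_{j})$ counted in $\omega$ with $s\in\{n,-n\}$. Now $-n<0\le m_{j}$ for every $j$, so $s=-n$ never contributes; and $s=n$ satisfies $n\ge m_{j}$ automatically, so it contributes exactly one pair $(n,S_{j})$ for each block $S_{j}$ strictly to the left of the block containing $n$. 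Therefore $los_{B'}\omega-los_{B'}\omega'=p(\omega)$, where $p(\omega)$ is the position (counted from $0$) of the block of $n$ in $\omega$.

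I would then split into the same two cases as in the previous proof. If $n$ (equivalently $-n$) lies in a pair of singleton blocks, then $\omega'\in S^{o}_{B}(\langle n-1\rangle,k-1)$, and each such $\omega'$ is obtained from exactly $2k$ partitions $\omega$: insert the singleton pair into any of the $k$ available slots for a pair and in either orientation ($\{n\}$ then $\{-n\}$, or $\{-n\}$ then $\{n\}$), so that the block of $n$ lands in position $2\ell-1$ or $2\ell$ for $\ell=1,\dots,k$. The generating function of $p(\omega)$ over these extensions is $\sum_{\ell=1}^{k}(q^{2\ell-1}+q^{2\ell})=q[2k]$, so this case contributes $q[2k]\,F(n-1,k-1)$. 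If $n,-n$ are not in singletons, then $\omega'\in S^{o}_{B}(\langle n-1\rangle,k)$, and each such $\omega'$ is obtained from exactly $2k+1$ partitions $\omega$: adjoin $n,-n$ to $S_{0}$, or adjoin $n$ to $S_{2\ell-1}$ and $-n$ to $S_{2\ell}$, or $n$ to $S_{2\ell}$ and $-n$ to $S_{2\ell-1}$ for $\ell=1,\dots,k$; here $p(\omega)$ takes each value in $\{0,1,\dots,2k\}$ exactly once, with generating function $[2k+1]$, so this case contributes $[2k+1]\,F(n-1,k)$. Feeding in the inductive hypothesis turns these into $q^{k}[2k]!!\,S_{B}[n-1,k-1]$ and $q^{k}[2k]!!\,[2k+1]\,S_{B}[n-1,k]$ respectively, and summing and applying the $S_{B}$ recurrence yields $F(n,k)=q^{k}[2k]!!\,S_{B}[n,k]=q^{k}S^{o}_{B}[n,k]$.

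The step I expect to be the main obstacle is the verification of the statistic-change formula $los_{B'}\omega-los_{B'}\omega'=p(\omega)$: one must confirm that the deletion changes no $m_{j}$, creates no new contributing pair $(s,S_{j})$ in which $S_{j}$ is one of the two modified blocks, and counts the pair involving the partner block of $n$ exactly once rather than twice. The remaining bookkeeping --- listing the insertion slots and orientations in the two cases, and the degenerate ranges $k>n-1$ where both sides vanish because $S_{B}[n-1,k]=S_{B}[n-1,k-1]=0$ --- should be routine.
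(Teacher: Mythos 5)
Your proof is correct and follows essentially the same route as the paper: induction on $n$, deleting $n,-n$, splitting into the singleton case (contributing $q[2k]$ times the $(k-1)$-term) and the non-singleton case (contributing $[2k+1]$ times the $k$-term), and closing with the recurrence for $S_B[n,k]$. You simply supply details the paper leaves implicit, namely the full base case for the ordered setting and the verification that deleting $n,-n$ changes $los_{B'}$ by exactly the position of the block containing $n$.
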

 \begin{proof}
 	We follow the idea of the proof of theorem 4 in \cite {MR1644457} and theorem 3.7 in \cite {MR1644458} along with the above theorem.
 	The proof follows by induction on $n$\\
 	Base case: For $n=1,k=1$ the result holds.
 	For the rest we take the same approach as in the proof of the last theorem. Given
 	$\rho \in S_{B}(\langle n \rangle, k)$ we can remove $n$ and $-n$ to obtain a new partition $\rho^{\prime}$.\\
 	If $n$ (and thus $-n$) is in a singleton block then $\rho^{\prime}\in
 	S_{B}(\langle {n-1}\rangle, k-1)$ and now $n$ can stay in any block except $S_{0}$ adding $1,2,3,\cdots 2k$ respectively. Thus these type of $\rho^{\prime}$ gives all together by induction hypothesis\\ $los_{B'}\rho^{\prime}= (q+q^{2}+q^{3}+\cdots q^{2k})[2(k-1)]!!q^{k-1}S_{B}[n-1,k-1] = q^{k}S^{0}_{B}[n-1,k-1]$.
 	Now if $n$ and $-n$ are in a block with other elements, then $\rho \in S_{B}(\langle n-1 \rangle, k)$ and as in the end of the proof of the last theorem using induction hypothesis these type of $\rho$ all together contributes $[2k]!!S_{B}[n-1,k]q^{k}[2k+1]$.
 	Hence the result follows, as $S^{0}_{B}[n,k]= [2k]!!S_{B}[n,k]$.\\
 \end{proof}
 
 \begin{lemma}
 	Let $A_k$ be the set of standard type B partitions on $2k+1$ blocks and $Z_k$ be the set of ordered type B partitions on $2k+1$ blocks with $lob_{B'}(\pi) = 0$ for all $\pi \in Z_k$.
 	Then there exists a bijection between $A_k$ and $Z_k$.
 	Additionally, we have that $lob_B(\pi) = k$ for all $\pi \in A_k$, where $lob_{B'}\rho= lob_{B}\rho$ dropping the condition $j>0$ in the definition of $loB_{B}\rho$.
 \end{lemma}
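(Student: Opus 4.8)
The plan is to realize the bijection as a single involution and to reduce both assertions to one bookkeeping computation for $lob_B$. Observe first that on $2k+1$-block (ordered) type B partitions $lob_B$ and $lob_{B'}$ in fact coincide: the extra pairs $(s,S_0)$ allowed by dropping the condition $j>0$ would require a positive $s$ with $s\le m_0=0$, of which there are none, so throughout one may work with $lob_B$. For an ordered type B partition $\tau=S_0/S_1/\cdots/S_{2k}$ set $\mu_t=\min|S_{2t-1}|=\min|S_{2t}|$ for $1\le t\le k$ (the equality because $S_{2t}=-S_{2t-1}$); note the $\mu_t$ are pairwise distinct, since the positive integer $\mu_t$ or its negative lies in a unique block and hence determines the pair $t$. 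Let $\phi$ be the map transposing $S_{2i-1}\leftrightarrow S_{2i}$ for each $i$ and fixing $S_0$; it clearly preserves the two defining conditions of an ordered type B partition, leaves each $\mu_t$ and each position-minimum $m_j$ unchanged (as $|S_{2i-1}|=|S_{2i}|$), keeps the number of nonempty blocks equal to $2k+1$, and satisfies $\phi^2=\mathrm{id}$.

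The heart of the argument is the observation: if $\mu_1<\mu_2<\cdots<\mu_k$, then $lob_B(\tau)=\#\{t:\mu_t\in S_{2t}\}$. Indeed, a positive element $s\in S_i$ with $i\ge1$ satisfies $s\ge\min|S_i|=\mu_{\lceil i/2\rceil}$, while the threshold attached to an earlier block $S_j$ with $j\ge1$ is $m_j=\mu_{\lceil j/2\rceil}$; hence a pair $(s,S_j)$ counted by $lob_B$ forces $\mu_{\lceil i/2\rceil}\le s\le\mu_{\lceil j/2\rceil}$, so $\lceil i/2\rceil\le\lceil j/2\rceil$, while $j<i$ gives $\lceil j/2\rceil\le\lceil i/2\rceil$. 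Thus $\lceil i/2\rceil=\lceil j/2\rceil=:t$, which together with $j<i$ forces $j=2t-1$ and $i=2t$, and then $s\le\mu_t\le s$ forces $s=\mu_t$, i.e. $\mu_t\in S_{2t}$; conversely each such $t$ contributes exactly the single pair $(\mu_t,S_{2t-1})$. This ceiling/parity bookkeeping — together with checking that no positive element of $S_{2t}$ other than $\mu_t$ ever contributes — is the step I expect to be the most delicate; what remains is formal.

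Granting the observation, the proof finishes as follows. For $\rho\in A_k$ the standard-form conditions say precisely that $\mu_1<\cdots<\mu_k$ and $\mu_t=m_{2t}\in S_{2t}$ for all $t$, so $lob_B(\rho)=\#\{1,\dots,k\}=k$. Applying $\phi$ keeps $\mu_1<\cdots<\mu_k$ and moves each $\mu_t$ out of its even block into the new odd block, so the observation gives $lob_B(\phi(\rho))=0$, i.e. $\phi(A_k)\subseteq Z_k$. Conversely let $\tau\in Z_k$. If some $\mu_t$ lay in $S_{2t}$, then $(\mu_t,S_{2t-1})$ would be counted by $lob_B$ (here $2t>2t-1\ge1$ and $\mu_t\le m_{2t-1}=\mu_t$), so $\mu_t\in S_{2t-1}$ for every $t$; and if $\mu_1<\cdots<\mu_k$ failed, then, the $\mu_t$ being distinct, $\mu_t>\mu_{t+1}$ for some $t$, and locating $\mu_{t+1}$ in its odd block $S_{2t+1}$ the pair $(\mu_{t+1},S_{2t-1})$ would be counted, a contradiction. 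Hence $\tau$ satisfies $\mu_1<\cdots<\mu_k$ and $\mu_t\in S_{2t-1}$ for all $t$, and these are exactly the conditions making $\phi(\tau)$ standard: $\phi(\tau)$ still has $0\in S_0=-S_0$ and paired blocks related by negation, its even-position minima are $0=m_0<\mu_1<\cdots<\mu_k$, and each value $\mu_t$ now sits in its even block. Thus $\phi(Z_k)\subseteq A_k$, and since $\phi^2=\mathrm{id}$ the restrictions $\phi|_{A_k}$ and $\phi|_{Z_k}$ are mutually inverse bijections $A_k\leftrightarrow Z_k$; the identity $lob_B(\rho)=k$ for $\rho\in A_k$ has already been established.
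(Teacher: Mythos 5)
Your proof is correct and follows essentially the same route as the paper's: you identify the pairs contributing to $lob_{B'}$ of a standard partition as exactly $(m_{2\ell},S_{2\ell-1})$ for $1\le\ell\le k$, and realize the bijection as the involution swapping $S_{2\ell-1}$ with $S_{2\ell}$, with $lob_{B'}=0$ forcing the minima to increase and to sit in the odd-indexed blocks. Your write-up is somewhat more explicit than the paper's (e.g.\ the observation that $lob_B=lob_{B'}$ on these partitions, and the descent argument showing why $lob_{B'}(\tau)=0$ forces $\mu_1<\cdots<\mu_k$, which the paper asserts without exhibiting the witness pair), but the underlying argument is the same.
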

 
 \begin{proof}
 	Suppose that $\pi \in A_k$.
 	As $lob_{B'}(\pi)$ is the cardinality of the set $lob_{B'}(\pi) = \{(s,S_j) : s \in S_i \text{ for some } i > j \text{ and } 0 < s \leq m_j\}$ and $s \geq m_i$ for all $s \in S_i$, we know that $(s,S_j) \in Lob_{B'}(\pi)$ implies $m_i \geq m_j$ for some $i > j$.
 	Since $\pi$ is a standard type B partition, this occurs exactly for the case where $i = 2\ell$ and $j = 2\ell-1$ for $1 \leq \ell \leq k$.
 	The only entry $s$ of $S_{2\ell}$ that satisfies $s \leq m_{2\ell-1}$ is of course $s = m_{2\ell} = m_{2\ell-1}$.
 	Thus $(m_{2\ell}, S_{2\ell-1})$ is an element of $lob_{B'}(\pi)$ for $1 \leq \ell \leq k$.
 	Hence $lob_{B'}(\pi) = k$.
 	
 	Now, suppose $\pi \in Z_k$.
 	As $lob_{B'}(\pi) = 0$, there is no $(s,S_j)$ such that $m_i \geq m_j$ and $s > 0$ for some $i > j$.
 	This implies that $0 < m_1 = m_2 < m_3 = m_4 < \dots < m_{2k-1} = m_{2k}$.
 	Additionally, we know that $m_{2\ell} \in S_{2\ell-1}$ for $1 \leq \ell \leq k$.
 	Otherwise, we would have $(m_{2\ell}, S_{2\ell-1}) \in Lob_{B'}(\pi)$ as before.
 	Thus $\pi$ is simply a standard type B partition with every $S_{2\ell}$ swapped with $S_{2\ell-1}$.
 	Hence, there is a bijection between $A_k$ and $Z_k$.
 \end{proof}
 
 \begin{corollary}
 	The generating function of $lob_{B'}$ over the standard type B partitions is given by
 	$$\sum_{\pi \in S_B[\langle n \rangle ,k]} q^{lob_{B'}(\pi)} = S_B[n,k] q^k.$$
 	Additionally, the generating function of $lob_{B'}$ over the ordered type B partitions satisfies the following
 	$$\sum_{\pi \in S_B^0[\langle n \rangle ,k]} q^{lob_{B'}(\pi)} = S_B[n,k].$$
 	In particular, 
 	$$\sum_{\pi \in S_B^0[\langle n \rangle ,1]} q^{lob_{B'}(\pi)} = S_B[n,1]q + S_B[n,1].$$
 \end{corollary}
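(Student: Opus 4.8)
The plan is to get the first identity directly from the preceding Lemma, to obtain the last identity (and, in principle, the middle one) by a deletion induction on $n$ patterned on the proofs of the two theorems above for $los_{B'}$, and to isolate the place where the general case becomes substantially harder than the $k=1$ case.

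\emph{First identity.} By the Lemma, $lob_{B'}(\pi)=k$ for every $\pi\in S_B(\langle n\rangle,k)$, so I would simply write $\sum_{\pi\in S_B(\langle n\rangle,k)}q^{lob_{B'}(\pi)}=q^{k}\cdot\#\,S_B(\langle n\rangle,k)$ and identify the cardinality: specializing the Sagan--Swanson identity $S_B[n,k]=\sum_{\rho}q^{inv\rho}$ at $q=1$ gives $\#\,S_B(\langle n\rangle,k)=S_B(n,k)$ (equivalently $S_B[n,k]$ evaluated at $q=1$), so the sum is $S_B(n,k)\,q^{k}$. As a cross-check one can also delete $n,-n$ from a standard partition: the pair either forms the last two blocks $S_{2k-1}=\{-n\},\,S_{2k}=\{n\}$, which raises $lob_{B'}$ by exactly $1$, or it sits in larger blocks, which leaves $lob_{B'}$ unchanged while admitting $2k+1$ reinsertions, so the generating function satisfies $f(n,k)=q\,f(n-1,k-1)+(2k+1)f(n-1,k)$, whose solution is $q^{k}S_B(n,k)$.

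\emph{Ordered case and the $k=1$ identity.} For the ordered statements I would run the same deletion: given $\omega\in S_B^0(\langle n\rangle,k)$, remove $n$ and $-n$, obtaining $\omega'\in S_B^0(\langle n-1\rangle,k-1)$ when these formed a singleton pair and $\omega'\in S_B^0(\langle n-1\rangle,k)$ otherwise. In the ``larger'' case the block minima and their left-to-right order do not change, and $n$ creates no new comparison $0<n\le m_j$ because no block has minimum $n$, so $lob_{B'}$ is unchanged; there are $2k+1$ ways to restore $\{n\},\{-n\}$, contributing $(2k+1)\,G(n-1,k)$, with $G(m,j):=\sum_{\omega\in S_B^0(\langle m\rangle,j)}q^{lob_{B'}(\omega)}$. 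In the ``singleton'' case one reinserts the pair at one of $k$ pair-slots in one of two orientations. For $k=1$ the only pair-slot is forced, and the two orientations are $(S_1,S_2)=(\{-n\},\{n\})$, which adds exactly the one comparison $(n,S_1)$, and $(\{n\},\{-n\})$, which adds none; this case therefore contributes $(1+q)\,G(n-1,0)=1+q$. Hence $G(n,1)=(1+q)+3\,G(n-1,1)$, and since $S_B(n,1)=1+3\,S_B(n-1,1)$ with the matching base value, $G(n,1)=(1+q)\,S_B(n,1)=S_B(n,1)\,q+S_B(n,1)$, which is the last displayed equation.

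\emph{The main obstacle.} The difficult step is the ``singleton'' case for $k\ge2$. The reinserted pair then consists of two blocks whose common minimum is $n$, and \emph{every} positive entry lying in a later block picks up two new comparisons against them; the number of such entries depends on which pair-slot the new pair occupies and on the internal layout of $\omega'$, not merely on $lob_{B'}(\omega')$. Concretely the ``singleton'' contribution becomes $(1+q)\sum_{\ell=1}^{k}q^{2p_\ell(\omega')}$, where $p_\ell(\omega')$ counts the positive entries in the pairs of $\omega'$ at slots $\ge\ell$, and this does not close into a recursion for $G$ alone. I would attack this by first sorting each ordered partition to its standard form and summing pair by pair from the largest pair-minimum down, carrying along the composition of positive-entry counts across the $k$ pairs as an auxiliary refinement of $lob_{B'}$; checking that this refined generating function collapses to the stated right-hand side --- and in particular whether a closed form of the claimed type genuinely survives, given that the per-standard-partition fibre sums are \emph{not} uniform --- is the point I expect to cost the most work.
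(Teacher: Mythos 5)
The paper offers no argument for this corollary at all: it is stated immediately after the lemma asserting that $lob_{B'}$ is identically $k$ on standard partitions and that the ordered partitions with $lob_{B'}=0$ are in bijection with the standard ones, and the reader is left to infer the displayed identities from that. Your derivation of the first display is exactly the intended one, and you are right to flag the notational problem: since $lob_{B'}$ is constant on $S_B(\langle n\rangle,k)$, the left-hand side equals $q^k$ times the \emph{cardinality} $S_B(n,k)$, i.e.\ $q^k\,S_B[n,k]\big|_{q=1}$, not $q^k$ times the polynomial $S_B[n,k]=\sum_\rho q^{inv\rho}$; the display is correct only if $S_B[n,k]$ is read as the ordinary type B Stirling number. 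Your recursion $f(n,k)=q\,f(n-1,k-1)+(2k+1)f(n-1,k)$ is a sound cross-check, and your computation $G(n,1)=(1+q)+3G(n-1,1)=(1+q)S_B(n,1)$ correctly establishes the third display (again with the ordinary, not the $q$-, Stirling number). The paper's implicit route to that third display is the lemma's bijection rather than a recursion --- for $k=1$ every ordered partition arises from a standard one by keeping or swapping the unique pair $(S_1,S_2)$, giving $lob_{B'}=1$ or $0$ respectively, each stratum of size $S_B(n,1)$ --- but your induction proves the same thing.

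Where you should stop working is the middle display. You correctly sensed that the singleton-insertion step does not close into a recursion for $k\ge 2$; the underlying reason is that the middle identity is false as stated, and no refinement by compositions of positive-entry counts will rescue it. At $q=1$ its left-hand side is $\#S_B^{o}(\langle n\rangle,k)=(2k)!!\,S_B(n,k)$ while its right-hand side is $S_B(n,k)$, and it contradicts the third display already at $k=1$ (it would force $S_B[n,1]$ rather than $(1+q)S_B[n,1]$); concretely, for $n=2$, $k=1$ your formula gives $4+4q$ against the claimed $S_B[2,1]=2+q+q^2$. What the lemma actually yields for ordered partitions is only a statement about the $lob_{B'}=0$ stratum $Z_k$, namely $\sum_{\pi\in Z_k}q^{lob_{B'}(\pi)}=\#Z_k=S_B(n,k)$, which is presumably what the middle display was meant to record. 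So your proposal proves everything in the corollary that is true, and the ``main obstacle'' you isolate is not an obstacle to be overcome but a symptom that the general-$k$ ordered claim needs to be restated (restricted to $Z_k$, or replaced by the full distribution over all $(2k)!!$ rearrangements, which for $k=1$ is $(1+q)S_B(n,1)$).
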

 
 \begin{lemma}
 	The statistics $ros_B$ and $rcb_B$ are equidistributed over the ordered type B set partitions.
 	The statistics $rob_B$ and $rcs_B$ are equidistributed over the ordered type B set partitions.
 \end{lemma}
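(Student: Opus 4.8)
The plan is to use the complement involution $\pi\mapsto\pi^c$ introduced just before the statement as the distribution--preserving bijection, following the type A argument of \cite{MR1644457}. The pointwise identities $rcb_B\pi^c=ros_B\pi$ and $rcs_B\pi^c=rob_B\pi$ were essentially observed already in the remark preceding the lemma; what remains is to (a) confirm that $\pi\mapsto\pi^c$ is a genuine bijection of $S^0_B(\langle n\rangle,k)$ onto itself, and (b) record those pointwise identities carefully enough that they clearly hold for every $\pi$, so that they upgrade to equality of distributions. For (a) I would note that the underlying letter map $c$, with $c(i)=n+1-i$ for $1\le i\le n$, $c(\bar i)=\overline{n+1-i}$, and $c(0)=0$, satisfies $c(-s)=-c(s)$ and $c\circ c=\mathrm{id}$. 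Applying $c$ blockwise therefore sends condition (1) to itself (the complemented block $S_0^c$ still contains $0$ and is closed under negation), and sends $S_{2i}=-S_{2i-1}$ to $S_{2i}^c=c(-S_{2i-1})=-S_{2i-1}^c$; since ordered partitions carry no standard--form requirement, no reordering is needed, so $\pi^c\in S^0_B(\langle n\rangle,k)$ and $(\pi^c)^c=\pi$.

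For (b) I would first record how the block extremes transform: for $j\ge 1$ we have $0\notin S_j$, so $|S_j^c|=\{\,n+1-t : t\in|S_j|\,\}$, whence $m_j(\pi^c)=n+1-M_j(\pi)$ and $M_j(\pi^c)=n+1-m_j(\pi)$. The heart of the matter is then the map on inversion pairs $(s,S_j)\mapsto(n+1-s,\,S_j^c)$. The point that makes it clean is that in all four statistics the letter $s$ appearing in a counted pair is automatically positive: in $ros_B$ and $rcs_B$ because $j\ge 1$ forces $m_j\ge 1$ and $M_j\ge 1$, so $s\ge m_j$ (resp.\ $s\ge M_j$) already gives $s>0$, while in $rob_B$ and $rcb_B$ positivity is imposed by hand. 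Hence $s\mapsto n+1-s$ is a genuine bijection from the positive letters of $S_i$ to the positive letters of $S_i^c$, valid for each $i<j$. Under it, $s\ge m_j \iff n+1-s\le n+1-m_j=M_j(\pi^c)$, turning each pair counted by $ros_B\pi$ into one counted by $rcb_B\pi^c$ and back; and $s\le m_j \iff n+1-s\ge n+1-m_j=M_j(\pi^c)$ (with $n+1-m_j\ge 1$ since $m_j\le n$, so positivity is again automatic) turns each pair of $rob_B\pi$ into one of $rcs_B\pi^c$ and back. Thus $ros_B\pi=rcb_B\pi^c$ and $rob_B\pi=rcs_B\pi^c$ for every $\pi\in S^0_B(\langle n\rangle,k)$.

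To finish, I would sum over all $\pi\in S^0_B(\langle n\rangle,k)$ and reindex by the involution $\pi'=\pi^c$: this yields $\sum_\pi q^{ros_B\pi}=\sum_{\pi'} q^{rcb_B\pi'}$ and, identically, $\sum_\pi q^{rob_B\pi}=\sum_{\pi'} q^{rcs_B\pi'}$, which is exactly the asserted equidistribution; it also descends to the union over all $k$ since $\pi^c$ has the same number of blocks as $\pi$. I do not anticipate a genuine obstacle here: the two points requiring care are precisely the routine check that $\pi^c$ stays inside $S^0_B(\langle n\rangle,k)$ and the observation that positivity of $s$ is free in $ros_B$ and $rcs_B$ — without the latter, $s\leftrightarrow n+1-s$ would fail to be a bijection on the relevant index sets and the correspondence would break.
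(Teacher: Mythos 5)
Your proposal is correct and follows the same route as the paper: the complement involution $\pi\mapsto\pi^c$ together with the pointwise identities $rcb_B\pi^c=ros_B\pi$ and $rcs_B\pi^c=rob_B\pi$, induced by the pair map $(s,S_j)\mapsto(n+1-s,S_j^c)$. The paper states this in one sentence; you supply the supporting details (closure of $S^0_B(\langle n\rangle,k)$ under complementation, the exchange $m_j\leftrightarrow n+1-M_j$, and the automatic positivity of $s$ in $ros_B$ and $rcs_B$), all of which are accurate.
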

 \begin{proof}
 	The proof follows as
 $rcb \pi^{c}
 = ros \pi$ and that $rcs \pi^{c}
 = rob \pi$, as because each $(s, S_{j})$ contributing in $ros \pi$ gives $(n+1-s, S_{j})$ contributing in $rcb \pi^{c}$ and conversely.
\end{proof}

\begin{lemma} \label{lem-standard-complement-bijection}
	Let $f(\pi)$ be the function given by taking the standardization of $\pi^c$ for any $\pi \in S_B(\langle n \rangle, k)$.
	Then $f$ is a bijection of $S_B(\langle n \rangle, k)$ with $f^2(\pi) = \pi$.
\end{lemma}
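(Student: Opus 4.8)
The plan is to realize $f$ as the composition $f = \mathrm{std}\circ c$, where $c(\pi)=\pi^c$ is the element-wise complementation of the excerpt and $\mathrm{std}$ is the standardization that reorders the block-pairs of an ordered type B partition so as to enforce the standardization conditions $(3)$ and $(4)$. First I would check that $c$ is an involution of $S^o_B(\langle n\rangle,k)$. On letters $c$ is $s\mapsto \operatorname{sgn}(s)\,(n+1-|s|)$ with $c(0)=0$, so $c(c(s))=s$ for every $s\in\langle n\rangle$; since $c$ acts on a block letter by letter, $c\circ c=\mathrm{id}$ on ordered set partitions, and $c$ does not change the number of blocks. I would then verify that $c$ preserves the two defining conditions of a type B partition: $0$ stays in its block, that block stays closed under negation because $c(\bar{i})=\overline{c(i)}$, and the relation $S_{2i}=-S_{2i-1}$ is preserved block-wise since $c(-s)=-c(s)$. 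Hence $\pi^c\in S^o_B(\langle n\rangle,k)$ whenever $\pi\in S^o_B(\langle n\rangle,k)$, so $f(\pi)=\mathrm{std}(\pi^c)$ is a well-defined element of $S_B(\langle n\rangle,k)$, and $f$ is a self-map of $S_B(\langle n\rangle,k)$.

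Second, I would isolate the two properties of $\mathrm{std}$ the argument uses. For an ordered type B partition, the block $S_0$ is determined (it is the one containing $0$), and for $i\ge 1$ the unordered pairs $\{S_{2i-1},S_{2i}\}$ are determined, each carrying the common value $|S_{2i-1}|=|S_{2i}|$ with positive minimum $m_i$; the $m_i$ are pairwise distinct, and in each pair exactly one block contains $+m_i$. Reordering the pairs by increasing $m_i$ and putting the block containing $+m_i$ in the even slot therefore produces a well-defined representative, which is the unique standard one. Consequently $(a)$ $\mathrm{std}$ is the identity on $S_B(\langle n\rangle,k)$, and $(b)$ $\mathrm{std}$ is unchanged by any reshuffling of the $k$ block-pairs together with a swap inside any pair; writing $G$ for the group of such moves, $\mathrm{std}(g\cdot\omega)=\mathrm{std}(\omega)$ for all $g\in G$ and all $\omega\in S^o_B(\langle n\rangle,k)$. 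Finally, since $c$ modifies each block independently of the others, $c$ commutes with the $G$-action: $(g\cdot\omega)^c=g\cdot(\omega^c)$.

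Third, I would assemble the computation. Fix $\pi\in S_B(\langle n\rangle,k)$. Since $\mathrm{std}(\pi^c)=f(\pi)$ is standard, $f(\pi)$ and $\pi^c$ have the same standardization, hence lie in the same $G$-orbit, so $f(\pi)=g\cdot\pi^c$ for some $g\in G$. Then
\begin{align*}
f^2(\pi) &= \mathrm{std}\bigl(f(\pi)^c\bigr) = \mathrm{std}\bigl((g\cdot\pi^c)^c\bigr) = \mathrm{std}\bigl(g\cdot(\pi^c)^c\bigr)\\
&= \mathrm{std}(g\cdot\pi) = \mathrm{std}(\pi) = \pi,
\end{align*}
using in turn $c\circ c=\mathrm{id}$, the $G$-commutation of $c$, the $G$-invariance $(b)$ of $\mathrm{std}$, and property $(a)$ together with the fact that $\pi$ is already standard. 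Therefore $f\circ f=\mathrm{id}$, so $f$ is its own inverse and hence a bijection of $S_B(\langle n\rangle,k)$.

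I expect the only step needing genuine care to be making $\mathrm{std}$ precise enough that $(a)$ and $(b)$ hold verbatim: one must check that the $m_i$ are pairwise distinct and that the two blocks $S_{2i-1},S_{2i}$ of a pair are distinct and asymmetric — if some block $B=S_{2i-1}$ with $i\ge1$ contained both $j$ and $-j$, then $B$ and $S_{2i}=-B$ would share the element $j$, contradicting disjointness of the blocks — so that exactly one block of each pair contains $+m_i$, $\mathrm{std}$ is genuinely well-defined, and the within-pair swap is a nontrivial move in $G$. These are small consequences of the definition of a type B partition together with disjointness of blocks; once $\mathrm{std}$ is pinned down, everything else is formal and uses nothing beyond the element-wise, block-independent nature of complementation. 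One may also note that $G$ acts simply transitively on each fibre of $\mathrm{std}$, although the identity $f^2=\mathrm{id}$ needs only the existence, not the uniqueness, of the element $g$ above.
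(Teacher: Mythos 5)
Your proof is correct and is essentially the paper's argument in more formal dress: the paper also observes that the blocks of $f(\pi)$ are exactly the complements of the blocks of $\pi$ (standardization only reorders them), so $f^2(\pi)$ has the same blocks as $\pi$, and uniqueness of the standard form forces $f^2(\pi)=\pi$. Your additional care about the well-definedness of standardization and the fact that $\pi^c$ remains a valid type B partition fills in details the paper leaves implicit, but the underlying idea is the same.
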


\begin{conjecture}
    We have that $rcb_B(f(\pi)) = ros_B(\pi) + k(k-1)$ for all $\pi \in S_B(\langle n \rangle, k)$.
    This gives us that the generating function of $rcb_B$ over the standard type B set partitions is given by
    $$\sum_{\pi \in S_B(\langle n \rangle ,k)} q^{rcb_B(\pi)} = q^{k(k-1)}S_B[n,k].$$
\end{conjecture}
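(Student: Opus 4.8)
The plan is to establish the termwise identity first and then read the generating function off from it. The natural starting point is the complement relation recorded in the text, but applied to the \emph{unstandardized} complement. Write $\pi^c$ for the partition obtained from $\pi$ by complementing every letter blockwise, \emph{without} restoring standard form. The map $s\mapsto n+1-s$ sends a pair $(s,S_j)$ counted by $ros_B(\pi)$, for which $s\ge m_j$, to a pair $(n+1-s,S_j^c)$ in $\pi^c$: indeed $n+1-s\le n+1-m_j$, and $n+1-m_j$ is exactly the maximal absolute value of the $j$-th block of $\pi^c$, while $n+1-s>0$ since $s\ge m_j\ge 1$. These are precisely the conditions defining $rcb_B$, and the assignment is a bijection between the two sets of counted pairs, so $rcb_B(\pi^c)=ros_B(\pi)$. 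Because $f(\pi)=\mathrm{std}(\pi^c)$ is obtained from $\pi^c$ only by permuting blocks back into standard form, the entire statement reduces to a single claim: restandardizing $\pi^c$ raises $rcb_B$ by exactly $k(k-1)$.

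To analyze that reordering I would first make it explicit. Writing $M_{2\ell}$ for the maximal absolute value of the $\ell$-th pair of $\pi$, under complementation this pair acquires new minimal absolute value $n+1-M_{2\ell}$; restandardization therefore reorders the $k$ pairs so that these new minima increase --- equivalently, so that the original maxima $M_{2\ell}$ decrease --- and then orients each pair so that its positive minimum occupies the even-indexed block. I would factor this reordering into adjacent transpositions of neighbouring pairs and track the change in $rcb_B$ caused by each elementary move, keeping the effect of the in-pair orientations separate from that of the transpositions. The target is to show that the sorting yields a net increase of $2\binom{k}{2}=k(k-1)$, which I would interpret as two forced positive closer-inversions for each unordered pair among the $k$ block-pairs, the factor $2$ coming from the two blocks $S_{2\ell-1},S_{2\ell}$ attached to each absolute-value class. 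This is the type B shadow of how $\binom{k}{2}$ enters Steingrimsson's $ROS=ros+\binom{k}{2}$, doubled by the $\pm$ symmetry.

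The main obstacle is precisely this increment count. The statistic $rcb_B$ weighs each moved positive element $s$ against the maximal absolute value $M_j$ of the block it crosses, and whether $s\le M_j$ holds depends on the actual absolute values filling the two pairs, not merely on their minima; hence the change contributed by an individual adjacent transposition is a priori content-dependent, and one must prove that after summing over all transpositions needed to sort the pairs this dependence cancels, leaving the constant $k(k-1)$ for \emph{every} $\pi$. I would try to force the cancellation by exploiting the structure of the sort --- it arranges the pairs by decreasing original maxima $M_{2\ell}$ --- which tightly constrains which comparisons $s\le M_j$ can be triggered as one pair is carried past another. I would calibrate the bookkeeping on the two extreme families first, namely all-singleton pairs and a single pair carrying every nonzero absolute value, and then run an induction on the number of out-of-order pairs; checking that both the extremes and the inductive step genuinely deliver the uniform constant $k(k-1)$ is the delicate heart of the argument and the step most likely to demand a careful, case-sensitive analysis.

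Granting the termwise identity, the generating function follows immediately. Summing $q^{rcb_B(f(\pi))}=q^{k(k-1)}\,q^{ros_B(\pi)}$ over $\pi\in S_B(\langle n\rangle,k)$, using that $f$ is a bijection of $S_B(\langle n\rangle,k)$ (Lemma~\ref{lem-standard-complement-bijection}) to re-index the left-hand side as $\sum_{\pi}q^{rcb_B(\pi)}$, and invoking $\sum_{\pi}q^{ros_B(\pi)}=\sum_{\rho}q^{\operatorname{inv}\rho}=S_B[n,k]$ from the theorem of Sagan and Swanson, gives $\sum_{\pi\in S_B(\langle n\rangle,k)}q^{rcb_B(\pi)}=q^{k(k-1)}S_B[n,k]$.
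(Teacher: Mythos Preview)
The paper does not actually prove this statement: it is labeled a \emph{Conjecture}, and the proof environment that immediately follows it in the source establishes only the preceding Lemma (that $f^{2}=\mathrm{id}$, hence $f$ is a bijection of $S_B(\langle n\rangle,k)$), not the termwise identity $rcb_B(f(\pi))=ros_B(\pi)+k(k-1)$ or its generating-function consequence. So there is no argument in the paper against which to compare your attempt.

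Your proposal is a coherent strategy, and its opening and closing paragraphs are sound and consistent with what the paper records elsewhere: $rcb_B(\pi^{c})=ros_B(\pi)$ is exactly the complement relation stated right after the definition of $\pi^{c}$, and your closing deduction of the generating function from the termwise identity---via the bijectivity of $f$ and Sagan--Swanson's $\sum_{\rho} q^{\mathrm{inv}\,\rho}=S_B[n,k]$---is valid. But the central claim, that restandardizing $\pi^{c}$ raises $rcb_B$ uniformly by $k(k-1)$, is not proved in your write-up either: you sketch a plan (factor the reordering into adjacent transpositions of block-pairs, argue that the content-dependent changes must cancel to a constant, calibrate on extreme families, induct on the number of out-of-order pairs) and you explicitly flag this cancellation as ``the delicate heart of the argument'' still to be checked. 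That is precisely the obstruction that leaves the statement a conjecture in the paper, and your outline does not overcome it. In short, your approach is a reasonable elaboration of the paper's heuristic, but neither you nor the paper supplies the key increment argument, so the statement remains unproved.
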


\begin{proof}
	Let $T$ be a part of $f(\pi)$ for some $\pi \in S_B(\langle n \rangle, k)$.
	Then $T$ is the complement of $S_i$ for some $S_i$ that is a part of $\pi$.
	This forces the complement of $T$ to be $S_i$.
	Hence $f^2(\pi)$ is a standard partition with the same parts as $\pi$.
	As $\pi$ was already a standard partition, we have $f^2(\pi) = \pi$ and that $f$ is a bijection.
\end{proof}

 \section{Signed Restricted Growth Functions[3]}
 
 \begin{definition}(SRGF):	
 	
 	A Signed Restricted Growth Function is a sequence of the form\\
 	$w=a_{0}a_{1}a^{*}_{1}a_{2}a^{*}_{2}\cdots a_{n}a^{*}_{n}$ of length $2n+1$, where
 	\begin{enumerate}
 		\item $a_{0}=0$
 		\item If $a_{i}=j$, then $a^{*}_{i}=\bar{j}$ and conversely $\forall i, j\in \{1,2\cdots n\}$.
 		\item $\forall i\geq 1, |a_{i+1}|\leq 1+Max\{|a_{0}|,|a_{1}|, |a_{2}|,\cdots |a_{i}|\}$
 		\item The pair $\bar{j}j$ appears before $j\bar{j}$ (if there is a $j\bar{j}$ in the sequence) for any $j\in \{1,2\cdots n\}$
 	\end{enumerate}
 \end{definition}
 Calling the set of all such SRGF of length 2n+1 as $SR_{n}$  and $SR_{n,k}$ accordingly for SRGF of length 2n+1 with maximal letter $k$, we can show analogously as in RGF that there is a bijection between the set of all type B partitions of $\langle n\rangle$ and $SR_{n}$, which preserves the one to one correspondence between $SR_{n,k}$ and $S_{B}(\langle n\rangle,k)$. The bijection is the following: Consider any type B partition $\pi$ of $\langle n\rangle$. Associate with $\pi$, the word $w(\pi)= a_{0}a_{1}a^{*}_{1}a_{2}a^{*}_{2}\cdots a_{n}a^{*}_{n}$, where 
 \begin{enumerate}
 	\item $a_{0}=0, a_{i}=a^{*}_{i}=0$, iff $i,\bar{i}\in S_{0}$, for $i\in \{1,2\cdots n\}$
 	\item $a_{i} = \bar{j},a^{*}_{i}=j$ iff $i\in S_{2j} $ for $i,j\in \{1,2\cdots n\}$
 	\item $a_{i} = j,a^{*}_{i}=\bar{j}$ iff $i\in S_{2j-1} $ for $i,j\in \{1,2\cdots n\}$
 	
 \end{enumerate}
 Note that for any type B partition $\pi$ of $\langle n \rangle, w(\pi)$ satisfies the condition i. and ii. in the definition of SRGF. Due to standardization of $\pi, w(\pi)$, satisfies condition iii. and iv. 
 \begin{example}\item For $\pi = 02\bar{2}/\bar{1}7/1\bar{7}/\bar{3}\bar{6}/36/\bar{4}5/4\bar{5},w(\pi)=0\bar{1}100\bar{2}2\bar{3}33\bar{3}\bar{2}21\bar{1}$
 \item For $w=000\bar{1}100\bar{2}21\bar{1}2\bar{2}\bar{3}3\bar{4}44\bar{4}$, the corresponding $\pi$ is 
 $01\bar{1}3\bar{3}/\bar{2}5/2\bar{5}/\bar{4}6/4\bar{6}/\bar{7}/7/\bar{8}9/8\bar{9}$
 \end{example}
 
 \textbf{Questions}:
 \begin{enumerate}
 	\item Is there any way to find the generating functions of the type B-analogue of the ten statistics due to Steingrimsson for set partitions of $\langle n\rangle$ in standard form by using the correspondence with SRGF as in the approach in \cite{MR1544457} or for some other known statistics like $\widehat{maj}$ (dual major index) for set partitions of $\langle n\rangle$ in standard form as in standard set partitions of $[n]$ in \cite{MR1644459}?
 	\item  What are the generating functions of the rest of the B-analogues of Steingrimsson's statistics for ordered set partitions of $\langle n\rangle$? 
 \end{enumerate}
 
 In reference \cite{MR1644459} section 4, we observe that, if for a set partition $\pi$, a positive integer contributes in the descent set of $F(\pi)$, where $F(\pi)$ is the Foata bijection as defined in \cite{MR1644459}, then the number of it's contribution is the number of it's corresponding occurrence in the inversion set of $\pi$. For example, in \cite{MR1644459} section 4, for $\pi= 138/2/476/59$  the positive integer $7$ contributes $1$ in the descent of $F(
\pi)= 1367/2/48/59$, which is the number of it's occurrence in the inversion set of $\pi$, namely as $(7,B_{1})$. This observation motivates us to define an analogue of Foata bijection for type B set partitions in $S_{B}(\langle n \rangle, k)$ as follows:\\
 This bijection $F$ is analogously defined via induction on $n$, as $F$ is identity whenever $n=0$. If $\pi \in S_{B}(\langle n \rangle, k)$, for $n>1$, let $\pi^{\prime}=\pi$ with $n, \bar{n}$ deleted. We construct $\sigma = F(\pi)$ from $\sigma^{\prime}= F(\pi^{\prime})$ as follows. If \begin{enumerate}
 	\item $\pi = S_{0}/S_{1}/S_{2}/\cdots/S_{2k}$, with $S_{2k-1}=\{\bar{n}\}$ and $S_{2k}= \{n\}$, (due to standardization condition, the other way can't happen), then let 
  $\sigma = \sigma^{\prime}$ with $\bar{n}$ and $n$ added in $S_{2k-1}$ and in $S_{2k}$ as singleton blocks respectively. 
 \item If $n$ is strictly contained in the block $S_{2k}$, then let $\sigma =\sigma^{\prime}$ with $n$ added in the block $S_{2k}$ and $\bar{n}$ added in the block $S_{2k-1}$.
 \item  If $n$ is in $S_{2k-1}$ and if $\bar{n}$ is in $S_{2k}$, then let $\sigma =\sigma^{\prime}$, along with both $n, \bar{n}$ added in $S_{0}$. 
 \item If $n$ or $\bar{n}$ are contained in $S_{2i}$, where $0<i<k$, then $\sigma =\sigma^{\prime}$ with $n$ or $\bar{n}$ added in $S_{2(k-i)-1}$ and $S_{2(k-i)}$ respectively in a way so that the mutual ordering is flipped. 
  \item If $n, \bar{n}$ are in $S_{0}$, then $n$ is added in $S_{2k-1}$ and $\bar{n}$ is added in $S_{2k}$.
\end{enumerate}
 So, we consider the following example where $\pi = 02\bar{2}/\bar{1}7/1\bar{7}/\bar{3}\bar{6}/36/\bar{4}5/4\bar{5}$
 
 \textbf{Table for the bijection F}\\
 \begin{tabular}{|l|c|c|}
 	\hline
 	$n$ & $\pi$ & $\sigma=F(\pi)$ \\
 	\hline
 	$0$ & $0$ & $0$ \\ 	
 	\hline
 	$1$ & $0/\bar{1}/1$ & $0/\bar{1}/1$ \\
 	\hline
 	$2$ & $02\bar{2}/\bar{1}/1$ & $0/\bar{1}2/1\bar{2}$ \\
 	\hline
 	$3$ & $02\bar{2}/\bar{1}/1/\bar{3}/3$   &	$0/\bar{1}2/1\bar{2}/\bar{3}/3$ \\
 	\hline
 	$4$ & $ 02\bar{2}/\bar{1}/1/\bar{3}/3/\bar{4}/4$ &$0/\bar{1}2/1\bar{2}/\bar{3}/3/\bar{4}/4$	\\
 	\hline
 	$5$ & $ 02\bar{2}/\bar{1}/1/\bar{3}/3/\bar{4}5/4\bar{5}$ & $05\bar{5}/\bar{1}2/1\bar{2}/\bar{3}/3/\bar{4}/4$\\
 	\hline
 	$6$ & $ 02\bar{2}/\bar{1}/1/\bar{3}\bar{6}/36/\bar{4}5/4\bar{5}$ & $05\bar{5}/\bar{1}26/1\bar{2}\bar{6}/\bar{3}/3/\bar{4}/4$\\
 	\hline
 	$7$ & $ 02\bar{2}/\bar{1}7/1\bar{7}/\bar{3}\bar{6}/36/\bar{4}5/4\bar{5}$ &	$05\bar{5}/\bar{1}26/1\bar{2}\bar{6}/\bar{3}\bar{7}/37/\bar{4}/4$\\
 	\hline
 	
 \end{tabular}
 
 We note that inv $(\pi)$ = maj$(F(\pi))$ =$10$ and inv$(F(\pi))$ = maj$(\pi)$ =$14$. We have the following theorem as an analogue of theorem 4.1 in \cite{MR1644459}:
 \begin{theorem}
 	The map $F:S_{B}(\langle n\rangle, k)\mapsto S_{B}(\langle n\rangle, k)$ defined above is a bijection where $\forall \pi \in S_{B}(\langle n\rangle, k)$ \begin{enumerate}
 		\item inv$(\pi)$ = maj$(F(\pi))$
 		\item inv$(F(\pi))$ = maj$(\pi)$
 	\end{enumerate}
 \end{theorem}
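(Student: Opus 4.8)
The plan is to follow the inductive scheme of Sagan's proof of Theorem~4.1 in \cite{MR1644459}, now in the signed setting, establishing the bijectivity of $F$ together with both identities by induction on $n$ (the assertion for a fixed $n$ being understood for all $k$ at once). For $n=0$ the only partition is $0$, the map $F$ is the identity, and $\mathrm{inv}(0)=\mathrm{maj}(0)=0$, so the base case is immediate. For the inductive step, given $\pi\in S_B(\langle n\rangle,k)$ we delete $n$ and $\bar n$ to get $\pi'$; the five construction rules partition $S_B(\langle n\rangle,k)$ according to the block of $\pi$ containing $n$, with $\pi'\in S_B(\langle n-1\rangle,k-1)$ exactly in the singleton case (rule~(1)) and $\pi'\in S_B(\langle n-1\rangle,k)$ otherwise. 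Write $\sigma=F(\pi)$ and $\sigma'=F(\pi')$.

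First I would verify that $F$ is a bijection, as the theorem claims. For well-definedness, in each rule one checks that reinserting $n,\bar n$ into $\sigma'$ as prescribed yields a sequence of $2k+1$ sets still satisfying $S_{2i}=-S_{2i-1}$ and the standardization conditions $m_{2i}\in S_{2i}$ and $0=m_0<m_2<\dots<m_{2k}$; the essential point is that $n$ is the largest element of $\langle n\rangle$ in absolute value, so reinserting it into an existing block changes no $m_j$, while in rule~(1) the two fresh end blocks $\{\bar n\}$, $\{n\}$ give $m_{2k}=n$, larger than every earlier minimum. Injectivity follows since from $\sigma$ one can read off which rule was used --- whether $S_{2k-1},S_{2k}$ are the singletons $\{\bar n\},\{n\}$; whether $n$ or $\bar n$ lies in $S_{2k}$ alongside other elements; whether $n$ and $\bar n$ both lie in $S_0$; or whether one of them lies in a block other than $S_0,S_{2k-1},S_{2k}$ --- then delete $n,\bar n$ to recover $\sigma'$ and apply the inductive inverse. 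A short further check shows that the rules pair up under $F$ as $(1)\leftrightarrow(1)$, $(2)\leftrightarrow(2)$, $(3)\leftrightarrow(5)$ and $(4)\leftrightarrow(4)$ --- in rule~(4), reversing the mutual order of $n,\bar n$ and the reindexing $i\mapsto k-i$ are mutually inverse operations --- so that $F^2=\mathrm{id}$.

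Next I would compute the effect on $\mathrm{inv}$. Since $n=\max\langle n\rangle$ we have $n\ge m_j$ for every $j$, while negative entries and $0$ never occur in an inversion and reinserting $\bar n$ changes no $m_j$; hence deleting $n$ and $\bar n$ from $\pi$ destroys exactly the inversions $(n,S_j)$ for blocks $S_j$ lying to the right of the block of $\pi$ containing $n$, and nothing else. Thus if $n$ lies in $S_i$ then $\mathrm{inv}(\pi)-\mathrm{inv}(\pi')=2k-i$ in the non-singleton cases, and $\mathrm{inv}(\pi)-\mathrm{inv}(\pi')=0$ in the singleton case. This is exactly the contribution pattern of the recurrence $S_B[n,k]=S_B[n-1,k-1]+[2k+1]\,S_B[n-1,k]$: rule~(1) accounts for $S_B[n-1,k-1]$, while rules~(2)--(5) place $n$ into $S_{2k}$, into $S_{2k-1}$, into a block $S_m$ with $1\le m\le 2k-2$, and into $S_0$ respectively, creating $0,1,\dots,2k$ new inversions and so accounting for $[2k+1]\,S_B[n-1,k]=(1+q+\dots+q^{2k})\,S_B[n-1,k]$.

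The heart of the argument --- and the step I expect to be the main obstacle --- is to show that the change in $\mathrm{maj}$ from $\sigma'$ to $\sigma$, computed through the SRGF-based definition of $\mathrm{maj}$ for signed partitions, equals the change in $\mathrm{inv}$ just found, and this must be done rule by rule. Concretely one tracks how the word $w(\sigma')$ is altered when $n,\bar n$ are inserted into $\sigma'$ by a given rule and extracts the resulting increment of the major index; the rules are arranged so that rule~(1) produces increment $0$ and rules~(2)--(5) together produce exactly the values $0,1,\dots,2k$, in agreement with $\mathrm{inv}(\pi)-\mathrm{inv}(\pi')=2k-i$. Rules~(2), (3) and (5) are comparatively gentle, touching only the extreme blocks $S_0,S_{2k-1},S_{2k}$; the delicate case is rule~(4), where the block index jumps from $2i$ in $\pi$ to $2(k-i)-1$ or $2(k-i)$ in $\sigma$ and the relative order of $n$ and $\bar n$ is reversed, so one has to verify that these two effects conspire to give exactly the required increment of $\mathrm{maj}$; this is the part that demands the most care with the standard-form and SRGF conditions. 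Granting this case analysis, identity~(1), $\mathrm{inv}(\pi)=\mathrm{maj}(F(\pi))$, follows by induction, and identity~(2) is then immediate: applying~(1) with $F(\pi)$ in place of $\pi$ and using $F^2=\mathrm{id}$ gives $\mathrm{inv}(F(\pi))=\mathrm{maj}(F(F(\pi)))=\mathrm{maj}(\pi)$. (Alternatively, one could run the same induction directly for~(2), computing $\mathrm{maj}(\pi)-\mathrm{maj}(\pi')$ and the change of $\mathrm{inv}$ under reinsertion into $\sigma'$; the two routes are parallel.)
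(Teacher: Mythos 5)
First, a point of comparison: the paper states this theorem with no proof at all, so there is no argument of the author's to measure yours against; your inductive skeleton, modeled on Sagan's Theorem~4.1, is the natural one. The parts you actually carry out are sound: the five rules are exhaustive by standardization (in the singleton case $S_{2k-1}=\{\bar n\}$, $S_{2k}=\{n\}$ is forced), reinserting $n,\bar n$ never disturbs any $m_j$, the rule pairing $(1)\leftrightarrow(1)$, $(2)\leftrightarrow(2)$, $(3)\leftrightarrow(5)$, $(4)\leftrightarrow(4)$ does give $F^2=\mathrm{id}$ by induction (since $\sigma$ with $n,\bar n$ deleted is exactly $\sigma'$), which cleanly reduces identity~(2) to identity~(1); and your count $\mathrm{inv}(\pi)-\mathrm{inv}(\pi')=2k-i$ when $n\in S_i$ (and $0$ in the singleton case) is correct, because $n\ge m_j$ for every $j$ while $\bar n$ and $0$ never participate in inversions.

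The genuine gap is the step you explicitly grant: the rule-by-rule verification that inserting $n,\bar n$ into $\sigma'$ raises $\mathrm{maj}$ by exactly $2k-i$. That computation is the entire content of the theorem --- everything else is bookkeeping --- and it cannot be deferred, for two reasons. First, the paper never actually defines $\mathrm{maj}(\pi)$ for $\pi\in S_B(\langle n\rangle,k)$; it only defines $\mathrm{maj}$ of an SRGF (via the $lb_B$ vector and the weights $t_i$) and of a reduced matrix, and separately asserts that these agree with ``$\mathrm{maj}(\pi)$''. A complete proof must first commit to one of these as the definition and work with it throughout. Second, once you fix, say, the SRGF definition, the increment is not a one-line observation: the pair appended at position $n$ of $w(\sigma')$ contributes its weight $t_n$ only when the corresponding $lb_B$ entry is positive, and which of the six $lb_B$ clauses applies (first versus repeated occurrence of $\bar j j$, the $00$ clause relevant to rule~(3), the $j\bar j$ clause relevant to rule~(2)) varies from rule to rule; in rule~(4) the target index $2(k-i)-1$ or $2(k-i)$ and the flipped mutual ordering must combine to give exactly $2k-2i$ or $2k-2i+1$, and that requires a genuine calculation using the standard-form conditions (and a disambiguation of what ``mutual ordering is flipped'' means). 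Until that case analysis is written out --- or a counterexample found, which the delicacy of these definitions does not rule out --- the proposal establishes bijectivity and the inversion count, but not the two identities that are the point of the theorem.
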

 As an analogue of section 2 in\cite{MR1644459}, we define $lb$ vector and $ls$ vector for SRGF induced by type B set partitions and denote them as $lb_{B}, ls_{B}$ respectively. We further extend the idea of major index as in section 2 in\cite{MR1644459}, via $lb_{B}$ for any element in $SR_{n,k}$.
 
 We define an $lb_{B}$ vector for SRGF as follows:
 
 \begin{definition}Let $w= a_{0}a_{1}a^{*}_{1}a_{2}a^{*}_{2}\cdots a_{k}a^{*}_{k}$ be an SRGF. Then,
 \begin{enumerate}
 	 \item $lb_{B}(a_{l})=0,$ if $l=0$ or if $a_{l}=\bar{j}$ for some $j>0$. 
 	 \item If we have an occurrence of $00$ after nonzero digits, each $0$ contributes $m$, where $m$ is the number of  $j$ to it's left, so that $|j|>0$.
 	 \item If $j\bar{j}$ appears after $\bar{j}j$, then the later $j>0$ contributes $1+2m$ where $m$ is the number of distinct $i$ to the left of that $j$, so that $i>j$. 
 	 \item If $\bar{j}j$ has repeated occurrences, the later $j$ contributes $2m$, where $m$ is the number of distinct $i$ to the left of that $j$, so that $i>j$. 
 	 \item If we get $l\bar{l}$ to the right of $j\bar{j}$ or $\bar{j}j$, where $l<j$, then $l$ contributes $2m+1$ where $m$ is the number of $j$ to it's left such that $j>l$.
 	 \item If we get $\bar{l}l$ to the right of $j\bar{j}$ or $\bar{j}j$, where $l<j$, then $l$ contributes $2m$ where $m$ is the number of $j$ to it's left such that $j>l$.
 	  	\end{enumerate}
 	  	\end{definition}
 \begin{example}Let $\pi = 0\bar{1}13\bar{3}/\bar{2}4/2\bar{4}/\bar{5}/5/\bar{6}\bar{8}/68/\bar{7}/7$.
 The corresponding SRGF is\\ $w(\pi)=000\bar{1}1001\bar{1}\bar{2}2\bar{3}3\bar{4}4\bar{3}3$. The corresponding $lb_{B}$ vector is
 $00000111000000002$ giving us the $lb_{B}$ statistics (as one of the four fundamental statistics of Wachs and White) as the sum of the digits in the vector as $5$ which is the same as the inversion of $\pi$.
 \end{example}
 Observing this we define the major index of any such SRGF $w=w(\pi)=\\ a_{0}a_{1}a^{*}_{1}a_{2}a^{*}_{2}\cdots a_{k}a^{*}_{k}$ analogously as in \cite{MR1644459} as follows:
 \begin{definition}
 $maj(w)= \sum_{lb_{i}(w)>0}{t_{i}}$, (where $lb_{i}(w)$ is the $i-th$ digit of the corresponding $lb$-vector).
 \begin{enumerate}
 	\item $t_{i}=1, t_{i+1}=0$ if $a_{i}=a_{i+1}=0$.
 	\item $t_{i}=2j$, if $a_{i}=j, a_{i+1}=\bar{j}$
 	\item $t_{i}=2j+1$, if $a_{i}=\bar{j}, a_{i+1}=j$
 	\item $t_{i}=0$, otherwise.
 \end{enumerate}
 \end{definition}
 
 \begin{example}: By the above definition, we see that the above $w(\pi)$ has maj as follows:
 $0+0+0+0+0+1+0+2(1)+0+0+0+0+0+0+0+0+2(3)+1=10=maj(\pi)$.
 And we have the theorem as an analogue of theorem 2.1(i) in \cite{MR1644459} 
 \end{example}
 \begin{theorem}
 	Let $f:S_{B}(\langle n\rangle, k)\mapsto SR_{n,k}$ be the above bijection in between $S_{B}(\langle n\rangle, k)$ and $SR_{n,k}$. Then for any $\pi \in S_{B}(\langle n\rangle, k), maj (f(\pi))= maj(\pi)$.
 \end{theorem}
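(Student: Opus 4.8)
The plan is to follow the proof of Theorem~2.1(i) in \cite{MR1644459}, decomposing both $maj(\pi)$ and $maj(f(\pi))$ into local contributions and matching them one by one. Both statistics are sums indexed by consecutive integers: $maj(\pi)$ is the sum, over those $i\in\{1,\dots,n\}$ that are descents of $\pi$, of an explicit weight attached to the way the pair $i,\,i+1$ is distributed among the blocks $S_0,S_1,\dots,S_{2k}$; while $maj(w)$ is, by definition, $\sum_i t_i$ over the positions $i$ of $w=w(\pi)$ at which the $lb_B$-vector is active, with $t_i$ determined by the pattern $(a_i,a_{i+1})$. So the first step is to set up the dictionary sending a consecutive pair $i,\,i+1$ to the block of three flat positions $2i-1,\ 2i,\ 2i+1$ of $w(\pi)$ around $a_i\,a_i^{*}\,a_{i+1}$, and to record which block-placements of $i$ and $i+1$ produce which of the patterns listed in the definitions of the $lb_B$-vector and of $maj(w)$.

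The heart of the argument is then a case analysis organised exactly as the four clauses defining $maj(w)$. Fixing $i$: (a)~if $i,\bar i,i+1,\overline{i+1}\in S_0$ the pair produces the ``$00$'' pattern, and one checks its $t$-value equals the weight $maj(\pi)$ assigns to this configuration; (b)--(c)~if the pair records a block $j\bar j$ right after the already present $\bar j j$, or a repeated $\bar j j$, it triggers the ``$t_i=2j$'' or ``$t_i=2j+1$'' clause; (d)--(e)~if it records an $l\bar l$ or $\bar l l$ lying to the right of a larger index $j$, the contribution to $maj(w)$ is $0$, and one checks $i$ is then not a descent of $\pi$, so the contribution to $maj(\pi)$ is $0$ as well. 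Throughout one must also confirm that the pair is ``active'' on the SRGF side --- the $lb_B$-vector is positive at position $2i-1$ or $2i$ --- precisely when $i$ is a descent of $\pi$; the standardization conditions on $\pi$ and conditions iii--iv on SRGFs are what force these two activation conditions to agree, especially when $i$ or $i+1$ is one of the block minima $m_{2\ell}$. Summing over all consecutive pairs gives $maj(f(\pi))=maj(\pi)$.

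I expect the main obstacle to be exactly this reconciliation of the two activation conditions, together with the bookkeeping for pairs involving a block minimum, where one must be sure the minimum of a newly opened block is not double-counted or dropped on either side. As in \cite{MR1644459}, an alternative route is induction on $n$: deleting $n$ and $\bar n$ sends $\pi$ to some $\pi'\in S_B(\langle n-1\rangle,k')$ with $k'\in\{k-1,k\}$, and $f(\pi)$ is obtained from $f(\pi')$ simply by appending the pair $(a_n,a_n^{*})$; one then shows $maj$ and $maj\circ f$ increase by the same amount, which depends only on which block of $\pi$ receives $n$ --- a brand new singleton pair $S_{2k-1}/S_{2k}=\{\bar n\}/\{n\}$, an existing even block $S_{2\ell}$ with $1\le\ell\le k$, or $S_0$. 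This mirrors the removal arguments already used earlier for $los_{B'}$ and for the Foata-type map $F$, at the cost of also tracking how appending the last pair can change $t_{2(n-1)}$.
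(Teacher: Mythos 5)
There is an important point of reference here: the paper states this theorem with \emph{no proof at all} --- it is justified only by the single worked example preceding it, and the text moves straight on to the $ls_B$ vector. So there is no argument of the paper's to compare yours against; a completed version of your proposal would already be more than the paper supplies.

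That said, what you have written is a plan rather than a proof, and the two places you yourself flag as ``the main obstacle'' are exactly where all the content lies. Concretely: (1) the paper never defines $maj(\pi)$ for a type B partition $\pi$ --- it only defines $maj(w)$ for an SRGF $w$ via the $lb_B$-vector and the $t_i$, and then uses $maj(\pi)$ in the Foata-bijection theorem and here without definition. Your proposed descent-based weight on consecutive pairs $i$, $i+1$ is a plausible reconstruction, but until that definition is written down explicitly the identity $maj(f(\pi))=maj(\pi)$ has no precise right-hand side and the case analysis cannot even be checked. (2) The case analysis itself --- matching each placement of $\{i,\bar i, i+1, \overline{i+1}\}$ among the blocks with the corresponding clause of the $lb_B$ and $t_i$ definitions, and verifying that position $2i$ of $w(\pi)$ is ``active'' exactly when $i$ is a descent of $\pi$ --- is deferred entirely. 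This is delicate because the clauses defining the $lb_B$-vector are not obviously exhaustive or mutually exclusive (the pair $00$ after nonzero digits versus the $t_i=1$ clause; ``first occurrence of $\bar j j$'' versus ``repeated $\bar j j$''), so one must first argue that every position falls under exactly one clause before summing contributions. Your alternative inductive route (delete $n,\bar n$ and track the change of $maj$ on both sides) is likely the cleaner path and parallels the removal arguments the paper does carry out for $los_{B'}$; but there too the key computation --- that appending $a_n a_n^{*}$ changes $maj(w)$ by exactly the amount that inserting $n,\bar n$ adds to $maj(\pi)$, including the possible change to $t_{2(n-1)}$ that you note --- is stated as a goal rather than performed. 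As it stands the proposal identifies a reasonable skeleton but does not yet establish the theorem.
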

 We define an analogue of ls vector for SRGF as follows:
 \begin{definition}
 If $w=w(\pi)= a_{0}a_{1}a^{*}_{1}a_{2}a^{*}_{2}\cdots a_{k}a^{*}_{k}$ is an SRGF, then 
 \begin{enumerate}
 	\item $ls_{B}(a_{l})=0$, if $a_{l}=0$, or $\bar{j}$ for some $j>0$. 
 \item Each pair $\bar{j}j$, $j$, contributes $2j-1$ and each pair $j\bar{j}, j$ contributes $2j-2$, if $j>0$.
 \item $ls_{B}$ statistics is the sum of the digits in the $ls_{B}$ vector.
 \item  Adding the digits of $ls_{B}(w(\pi))$, we get $los_{B}(\pi)$ always.
\end{enumerate}
\end{definition}
\begin{example}If we consider again $\pi = 0\bar{1}13\bar{3}/\bar{2}4/2\bar{4}/\bar{5}/5/\bar{6}\bar{8}/68/\bar{7}/7$, then the $ls_{B}$ vector is
 $00001000(=2(1)-2)003(=2(2)-1)05(2(3)-1)07(=2(4)-1)05(2(3)-1)$. If we add the digits we get $21$ which is same as the $los_{B}(\pi)$.
 \end{example}
 
 \begin{definition} We define a bijection between $S_{B}(\langle n \rangle, k)$ and $RR(\langle n \rangle, k)$, where $RR(\langle n \rangle, k)$, is the set of all $(2k+1)X(2n+1)$ row echelon form matrices where
 \begin{enumerate}
 	\item Every entry is either $0$, or $\bar{1}$, or $1$ and the first entry in the first row is always $1$ as, $0\in S_{0}$ and we choose keeping the $\bar{1}$ in the column prior to that of $1$.
 	\item After that, in the first row $1$ and $\bar{1}$ appear as a pair (since in $S_{0}$ positive and the corresponding negetive integer appears as a pair) always $\bar{1}1$ is placed as a pair in consequtive columns.
 	\item There is at least one $1$ or one $\bar{1}$ in each row and exactly one $\bar{1}$ or one $1$ in each column.
 	\item Excluding the first row and first column, if we have $\bar{1}$ (or $1$) in the row $2i$ and column $j$, then we have $1$ (or $\bar{1}$ in the row $2i+1$ or $2i-1 (i>1)$ and in column $j+1$ or in $j-1 (j>2)$ and conversely.
 	\item  Due to the standardization condition, always a pair $\bar{1}1$ in two consecutive rows and columns respectively appear in some prior columns before the pair $1\bar{1}$, if the second pair belong to two same consecutive rows as in the prior $\bar{1}1$.
 	\item  In any column $j>1$, the leading non zero element is $1$, if the row $i$ is odd and the leading non zero element is $\bar{1}$, if the row $i$ is even (as $m_{i} \in S_{2i}$ always).
 \end{enumerate}
 \end{definition}
 
 Next we define an analogue of six more vectors in case of SRGF corresponding the type B analogues of six more statistics defined by Steingrimsson as follows. Consider the SRGF $w=a_{0}a_{1}a^{*}_{1}a_{2}a^{*}_{2}\cdots a_{k}a^{*}_{k}$. 
 \begin{definition}
  An analogue for rcb vector in case of SRGF is as follows:
 \begin{enumerate}
 	\item Because of the condition $s>0$, in the definition of $rcb_{B}$, we set $rcb_{B}(a_{0})=0$, and $rcb_{B}(\bar{j})=0$ for any $j>0$.
 	\item If we have an occurrence of $00$ after the first $a_{0}=0$ in the SRGF, each $0$ contributes $m$, where $m$ is the number of  $j$ to it's right, so that $|j|>0$.
 	\item For any $\bar{j}j (j>0)$,  $rcb_{B}(j)=2m$, where $m$ is the number of $i$ to the right of $j$, so that $i>j$.
 	\item For any $j\bar{j}$ afterwards, we set $rcb_{B}(j)=2m+1$ where $i$ is as before.
 \end{enumerate}
 \end{definition}
 	 	
 	\begin{example} If $\pi =0\bar{1}13\bar{3}8\bar{8}/\bar{2}4/2\bar{4}/\bar{5}/5/\bar{6}\bar{8}/68/\bar{7}9/7\bar{9}$, then the $rcb_{B}$ vector is\\
 	$0080608700402000210$. And if we add the digits we get $38$ which is same as the $rcb_{B}(\pi)$.
 	\end{example}

 	\begin{definition} An analogue for lcb vector in case of SRGF as follows:
 	\begin{enumerate}
 		\item 	Define $lcb_{B}(a_{l})=0$, if $a_{l}=a_{0}$ or negetive. 
 	\item  If $a_{l}=j>0$, then that $a_{l}$ does contribute $0$, for each pair of zeros to it's right and if we have a pair $\bar{j}j$ and no smaller positive element to it's right, then that $j>0$ contributes $1$.
 	\item  Any pair $\bar{j}j$, the $j>0$, gives $2$ for each pair $\bar{i}i$ and/or $i\bar{i}$ to it's right for each $i<j$. Additionally, $j>0$, gives $1$ for itself in that case. 
 	\item Any pair, $j\bar{j}$, the $j>0$ gives $2l$, where $l$ is the number of $i's$ to it's right $i>0, i<j$.
 \end{enumerate}
 \end{definition}
 
 \begin{example} If $\pi =0\bar{1}13\bar{3}/\bar{2}4/2\bar{4}/\bar{5}/5/\bar{6}\bar{8}/68/\bar{7}/7$, then the $rs$ vector is\\
 	$00001000001010301$. If we add the digits we get $7$ which is same as the $lcb_{B}(\pi)$.
 	\end{example}
 	
 	\begin{definition}An analogue for $rob_{B}$ vector in case of SRGF as follows:
 	\begin{enumerate}
 	\item Define
 	$rob_{B}(a_{l})=0$, if $l=0$ or if $a_{l}$ is negetive. 
 	\item  If we have a pair $00$, then we consider the first $0$ gives $0$ and the second one gives $2l$, where $l$ is the number of distinct $i>0$ to the right of that $0$, that does not appear to the left of that $0$. 
 	\item If there is a pair $\bar{j}j$ or $j\bar{j}, j>0$, then the $j>0$ gives $2l$, where $l$ is the number of distinct $i>j$ to the right of $j$, that are not to the left of that $j$.
 	 	\end{enumerate}
 	 	\end{definition}
 	 	\begin{example}
 If $\pi =0\bar{1}13\bar{3}/\bar{2}4/2\bar{4}/\bar{5}/5/\bar{6}\bar{8}/68/\bar{7}/7$, then the $rob_{B}$ vector is\\
 $0080606600402000000$. If we add the digits we get $32$ which is same as the $rob_{B}$ statistics of that type $B$ partition.
 \end{example}
  \begin{definition}
 An analogue for lob vector in case of SRGF as follows:
 \begin{enumerate}
 	\item  If $a_{l}=0$ or negetive, then $lob_{B}(a_{l})=0$. Otherwise, for the first occurrence of $\bar{j}j(j>0)$, the $j>0$ gives $1$.
 	\item  If there is any $\bar{j}j$ or $j\bar{j}$ repeated for the same $j$, that does not contribute anything.
 \end{enumerate}
 \end{definition}
 \begin{example}
If $\pi =0\bar{1}13\bar{3}/\bar{2}4/2\bar{4}/\bar{5}/5/\bar{6}\bar{8}/68/\bar{7}/7$, then the $lob_{B}$ vector is\\ $00001000001010100$ and if we add the digits, then we get $4$ which is the same as the $lob_{B}$ statistics for that $\pi$.
\end{example}

 \begin{definition}  An analogue for rcs vector in case of SRGF is as follows:
 \begin{enumerate}
  	\item We define $rcs_{B} (a_{l})=0$, if $a_{l}=0$ or negetive.
 	\item The first occurrence of $\bar{j}j$ does not contribute anything. 
 	\item If $\bar{j}j$ repeats after $\bar{j}j$, then the $j>0$, gives $2(l-m)$, where $l$ is the number of pairs $\bar{i}i$ and/or $i\bar{i}$ to the left of $\bar{j}j$ with $i<j$ and $m$ is the number of pairs $\bar{i}i$ and/or $i\bar{i}$ to the left of $\bar{j}j$ with $i>j$.
 	\item If $j\bar{j}$, after the $\bar{j}j$ appears, then the $j>0$ gives $1+2(l-m)$, where $l,m$ are as before. 
\end{enumerate}
\end{definition}

\begin{example} If $\pi =0\bar{2}2/\bar{1}7/1\bar{7}/\bar{3}\bar{6}/36/\bar{4}5/4\bar{5}$, then the $rcs_{B}$ vector is $000000000100250$.
\end{example}
  \begin{definition}An analogue for lcs vector in case of SRGF as follows:
 \begin{enumerate}
  	\item $lcs_{B}(a_{l})=0$, if $a_{l}=0$ or negetive. The pair $1\bar{1}$ does not contribute anything. If we have repeated occurrence of $\bar{1}1$, the right most $1$ contributes $1$ provided there is no $\bar{1}$ to it's right, otherwise that $1$ contributes $0$.
 	 	\item Consider any $\bar{j}j, j>1$. Then $j$ contributes $2l+1$, where $l$ is the number of distinct $i$ to the left of $j$, that are not to the right of $j$, such that $0<i<j$, provided there is no $j$ to the right of that $j$ again.
 	 	\item If $j>1$ is like above, then it contributes $2l$, where $l$ is as above, provided there is some (possibly more than $1$) $j$ to the right of that $j$.
 	 	\item Consider any $j\bar{j}, j>1$. Then, that $j$ contributes $2l$, where $l$ is as above.
 	  	 	 	
 \end{enumerate}
 	\end{definition}
\begin{example} Thus for example if $\pi =0\bar{1}13\bar{3}/\bar{2}4/2\bar{4}/\bar{5}/5/\bar{6}\bar{8}/68/\bar{7}9/7\bar{9}$, then the $lcs_{B}$ vector is $0000000000304040560$, adding the digits we get $22$ which is the $lcs_{B}$ statistics for the partition $\pi$.
 \end{example}
  
 As in \cite{MR1644459} we can create an analogous bijection $h:S_{B}(\langle n\rangle,k)\mapsto RR(\langle n\rangle,k)$ as follows $h(\pi)=M$, where $M= (m_{i,j})_{(2k+1)X(2n+1)}$, with 
 \begin{enumerate}
 	\item  $m_{1,1}= 1$ (as $0\in S_{0}$)
 	\item $m_{1,j}=\bar{1}, m_{1,j+1}=1$, if $\bar{j}, j\in S_{0}\forall j\in \{1,2,\cdots 2n+1\}$
 	\item For $i\in \{1,2,\cdots 2k+1\} 1,m_{ij} =\bar{1}$ if $\bar{j}\in S_{i-1}$ and $m_{ij} =1$, if $j\in S_{i-1}$
 	 \end{enumerate}
 
 Now if we define the major index of such a matrix $M$ as $maj(M) =\sum_{m_{i,j}=1} i$, where the sum is restricted to those $1$'s which have another $1$, strictly to their south-west (as in \cite{MR1644459}) then we have the following theorem as an analogue of theorem 3.3 (i) as follows:
 
 \begin{theorem}
 	For the above bijection $h$, and for any $\pi\in S_{B}(\langle n \rangle, k)$ $maj(h(\pi))= maj (\pi)$
 \end{theorem}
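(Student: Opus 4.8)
We aim to show $maj(h(\pi))=maj(\pi)$. The plan is to invoke the theorem already established, $maj(f(\pi))=maj(\pi)$ for the SRGF bijection $f\colon S_{B}(\langle n\rangle,k)\to SR_{n,k}$; it then suffices to prove $maj(h(\pi))=maj(f(\pi))$, that is, that the matrix major index equals the major index of the associated signed restricted growth function. For $t\in\langle n\rangle$ let $b_{t}$ denote the index of the block of $\pi$ containing $t$. By the construction of $h$ every column of $M=h(\pi)$ has exactly one nonzero entry: column $1$ has $m_{1,1}=1$, while for $t\neq 0$ the column indexed by $t$ has, in row $b_{t}+1$, the entry $+1$ when $t>0$ and $\bar 1$ when $t<0$. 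Hence the $1$'s of $M$ occur only in column $1$ and in the columns of the positive elements $1,\dots,n$, and in the column ordering $0<\bar 1<1<\bar 2<2<\cdots<\bar n<n$ on which $M$ is built, the column of a positive $s$ comes before that of a positive $t$ exactly when $s<t$.

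From this one reads off $maj(M)$ at once. The entry $m_{1,1}=1$ has nothing to its west and is never counted; and the $1$ in the column of a positive $t$ lies in row $b_{t}+1$ and has a $1$ strictly to its south-west precisely when there is a positive $s<t$ with $b_{s}>b_{t}$ (its column is then automatically to the left and its $1$ automatically in a strictly lower row). Therefore
$$maj(M)=\sum_{\substack{1\le t\le n\\ \exists\,s:\ 0<s<t,\ b_{s}>b_{t}}}\bigl(b_{t}+1\bigr).$$

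It remains to identify this with $maj(w)=\sum_{i:\,lb_{i}(w)>0}t_{i}$ for $w=f(\pi)=a_{0}a_{1}a_{1}^{*}\cdots a_{n}a_{n}^{*}$. The part of $w$ recording which block contains the positive element $t$ is the adjacent pair $a_{t}a_{t}^{*}=(a_{t},-a_{t})$, which reads $00$ if $t\in S_{0}$, reads $j\,\bar j$ if $t\in S_{2j-1}$, and reads $\bar j\,j$ if $t\in S_{2j}$; in these three cases the weight the definition of $maj$ assigns to this pair is $1$, $2j$, $2j+1$ respectively — exactly $b_{t}+1$ in every case. So the weight $maj(w)$ attaches to $t$ is the row index $b_{t}+1$ of the matrix $1$ in the column of $t$, and the two sums carry the same summands. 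It then remains only to verify that the digit of the $lb_{B}$-vector associated with $t$ is positive exactly when some positive $s<t$ satisfies $b_{s}>b_{t}$: the leading $0$ and all negative letters receive digit $0$, matching the fact that in $M$ the entry $m_{1,1}$ and the entries of negative columns (which are $\bar 1$'s) are never counted; and for the positive letters one runs through the six clauses defining the $lb_{B}$-vector — the clause for a factor $00$ after nonzero letters, the clauses for a repeated $\bar j\,j$ and for a $j\,\bar j$ following a $\bar j\,j$, and the clauses for $l\,\bar l$ and $\bar l\,l$ to the right of a pair with larger label — checking, with the help of the standard-form constraints built into $SR_{n,k}$ (equivalently the row-echelon conditions defining $RR(\langle n\rangle,k)$), that the digit comes out nonzero exactly when a strictly larger block index $b_{s}$ occurs among the positive $s<t$. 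Putting everything together gives $maj(M)=maj(w)=maj(\pi)$.

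The one genuinely laborious step is this last verification: reconciling the six-case recipe for the $lb_{B}$-vector with the single condition ``there is a positive $s<t$ with $b_{s}>b_{t}$''. The argument is elementary but requires patient bookkeeping, the difficulty being concentrated in the degenerate cases — repeated factors $\bar j\,j$, which occur exactly when $t$ and a smaller element fall in different blocks of the standardization within one absolute-value class, and the interplay with the normalization ``$\bar j\,j$ appears before $j\,\bar j$''. One could instead proceed by induction on $n$, deleting $n$ and $\bar n$ and tracking how $maj(M)$ and $maj(w)$ change, as in the proofs of the earlier theorems; the direct comparison above, however, seems the cleanest route.
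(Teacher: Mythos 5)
The paper states this theorem without any proof at all --- the theorem is followed immediately by the worked $9\times 17$ matrix example --- so there is no argument of the paper's to compare yours against; your proposal has to stand on its own. Its skeleton is sound and, as far as it goes, correct: the observation that the $1$'s of $M=h(\pi)$ sit in column $1$ and in the columns of the positive elements, each in row $b_t+1$, so that
$$maj(M)=\sum_{t\,:\,\exists\, s,\ 0<s<t,\ b_s>b_t}(b_t+1),$$
is right, and the identification of the weight $t_i\in\{1,2j,2j+1\}$ attached to the pair $a_ta_t^*$ with the row index $b_t+1$ is exactly the correct bookkeeping (it checks out on the paper's example, where both sides give $10$). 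The reduction through the earlier theorem $maj(f(\pi))=maj(\pi)$ is legitimate as a citation of a previously stated result, though you should be aware that the paper states that theorem without proof as well, so nothing is being discharged there.

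The genuine gap is the step you yourself flag and then decline to carry out: the equivalence ``$lb_{B}$-digit of the pair for $t$ is positive $\iff$ there is a positive $s<t$ with $b_s>b_t$.'' Once the weights are matched, this equivalence \emph{is} the theorem --- everything else is notation --- so asserting that it follows from ``patient bookkeeping'' through the six clauses leaves the proof incomplete precisely where its content lies. It is not a trivial matter of reading off definitions, because the six clauses of the $lb_{B}$-vector are stated informally and overlap (for instance, an $l\bar l$ preceded only by an earlier $\bar l\, l$ of the same absolute value is governed by clause 3, not clause 5, and the first occurrence of a value $l$ falls under clause 6 with possibly $m=0$), and one must invoke the standardization conditions $m_0<m_2<\cdots<m_{2k}$ and $m_{2i}\in S_{2i}$ to rule out configurations where a clause would assign a positive digit without a higher earlier block, or vice versa. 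The equivalence does appear to be true --- spot checks on the paper's examples confirm it --- but until the case analysis is actually written down, the proof is an outline rather than a proof. If you complete that verification (or replace it with the induction on $n$ you mention, deleting $n$ and $\bar n$ and tracking both sides), the argument will be complete.
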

 For example for our $\pi = 0\bar{1}13\bar{3}/\bar{2}4/2\bar{4}/\bar{5}/5/\bar{6}\bar{8}/68/\bar{7}/7, h(\pi)$ is a $9X17$ matrix as follows: 
 	\[ \begin{pmatrix}
 	1 & \bar{1} & 1 & 0 & 0 & \bar{1} & 1 & 0 & 0 & 0& 0 & 0& 0 & 0& 0 & 0 & 0\\
 	0 & 0 & 0 & \bar{1} & 0 & 0 & 0 & 0 & 1 & 0& 0 & 0& 0 & 0& 0 & 0 & 0\\
 	0 & 0 & 0 & 0 & 1 & 0 & 0 & 0 & 0 & \bar{1}& 0 & 0& 0 & 0& 0 & 0 & 0\\
 	0 & 0 & 0 & 0 & 0 & 0 & 0 & 0 & 0 & 0& \bar{1} & 0& 0 & 0& 0 & 0 & 0\\
 	0 & 0 & 0 & 0 & 0 & 0 & 0 & 0 & 0 & 0& 0 & 1& 0 & 0& 0 & 0 & 0\\
 	0 & 0 & 0 & 0 & 0 & 0 & 0 & 0 & 0 & 0& 0 & 0& \bar{1} & 0& 0 & \bar{1} & 0\\
 	0 & 0 & 0 & 0 & 0 & 0 & 0 & 0 & 0 & 0& 0 & 0& 0 & 1& 0 & 0 & 1\\
 	0 & 0 & 0 & 0 & 0 & 0 & 0 & 0 & 0 & 0& 0 & 0& 0 & 0& \bar{1} & 0 & 0\\
 	0 & 0 & 0 & 0 & 0 & 0 & 0 & 0 & 0 & 0& 0 & 0& 0 & 0& 0 & 1 & 0\\
 	
 	\end{pmatrix} \]

Let us define the dual descent multiset analogously as in \cite{MR1644459} in case of type B partitions.

\begin{definition}
	For any $\pi\in S_{B}(\langle n\rangle, k)$ the dual descent set of $\pi$ is denoted as $\widehat {Des\pi}$ and is defined as the multiset$\{2^{a_{2}} 3^{a_{3}},\cdots (2k+1)^{a_{2k+1}}\}$
	where $a_{i}$ is the number of $s\in S_{i}$ such that $s>m_{i-1}, \forall i \in \{1,2,\cdots 2k+1\}$
	\end{definition}
	
	Accordingly, we define the natural analogue of dual major index for any type B partition as $\widehat{maj_{B}{\pi}}=\sum_{i\in \widehat {Des\pi}}(i-1)$.
	
	Afterwards, we find a reccurrence relation for the generating function of the dual major index for type B partitions as follows:
	\begin{theorem}
		$\widehat{S_{B}}[n,k]=q^{2k}\widehat{S_{B}}[n-1,k-1]+[2k+1]\widehat{S_{B}}[n-1,k]$, where $\widehat{S_{B}}[n,k]$ is the generating function of $\widehat{maj_{B}{\pi}}$
		\end{theorem}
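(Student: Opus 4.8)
The plan is to prove the identity by induction on $n$, following the template of the two $los_{B'}$ theorems above and of Sagan's treatment of the dual major index in \cite{MR1644459}; throughout, $\widehat{S_{B}}[n,k]=\sum_{\rho\in S_{B}(\langle n\rangle,k)}q^{\widehat{maj_{B}}(\rho)}$. The base case $n=0$ is immediate: $S_{B}(\langle 0\rangle,0)$ consists of the single partition with block $\{0\}$, whose dual descent multiset is empty, so $\widehat{S_{B}}[0,k]=\delta_{0,k}$, which satisfies the recurrence vacuously.

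For the inductive step, fix $\rho\in S_{B}(\langle n\rangle,k)$ and let $\rho'$ be the type B partition obtained by deleting $n$ and $\bar n$. As in the earlier proofs there are two cases. If $n$ (and hence $\bar n$) lies in a singleton block, then the standardization condition forces the last two blocks of $\rho$ to be $\{\bar n\}$ and $\{n\}$; then $\rho'\in S_{B}(\langle n-1\rangle,k-1)$ and $\rho$ is recovered uniquely from $\rho'$ by re-appending these two blocks. The key point is that this operation raises the dual major index by exactly $2k$: deleting $\{\bar n\}$ and $\{n\}$ leaves every surviving block minimum $m_i$ unchanged, hence every surviving coordinate $a_i$ unchanged; the block $\{\bar n\}$ contributes nothing to the dual descent multiset (it contains no element meeting the threshold set by the preceding block's minimum, which is nonnegative), while the block $\{n\}$ contributes exactly one element to its own coordinate, whose weight in $\widehat{maj_{B}}$ is $2k$ (its index $2k+1$ minus one). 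Thus $\widehat{maj_{B}}(\rho)=\widehat{maj_{B}}(\rho')+2k$, and summing over all such $\rho$ gives the term $q^{2k}\widehat{S_{B}}[n-1,k-1]$.

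In the remaining case $n$ lies in a block with other elements, so $\rho'\in S_{B}(\langle n-1\rangle,k)$, and $\rho$ is obtained by inserting $n$ into one of the $2k+1$ blocks of $\rho'$ and $\bar n$ into the block paired with it (both into the zero block when $n$ goes there). Since $n$ and $\bar n$ have the largest absolute value and are being inserted into blocks that are already nonempty, no block minimum $m_i$ changes; consequently inserting $\bar n$ alters no coordinate of the dual descent multiset, inserting $n$ into the zero block alters no coordinate, and inserting $n$ into the $j$-th block for $j\geq 2$ increases exactly the $j$-th coordinate by one, contributing its weight $j-1$ to $\widehat{maj_{B}}$. Letting $j$ run through the $k$ even-indexed and $k$ odd-indexed non-zero blocks and also allowing the zero block, the $2k+1$ resulting increments to $\widehat{maj_{B}}$ are precisely $0,1,2,\dots,2k$. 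Hence summing over the insertions and over $\rho'$ gives the term $[2k+1]\widehat{S_{B}}[n-1,k]$, and adding the two contributions yields $\widehat{S_{B}}[n,k]=q^{2k}\widehat{S_{B}}[n-1,k-1]+[2k+1]\widehat{S_{B}}[n-1,k]$. As a sanity check, this recurrence together with the one for $S_{B}[n,k]$ forces $\widehat{S_{B}}[n,k]=q^{k(k+1)}S_{B}[n,k]$, consistent with the earlier identity $q^{k(k+1)}S_{B}[n,k]=\sum_{\rho}q^{los_{B'}\rho}$, i.e.\ $\widehat{maj_{B}}$ and $los_{B'}$ are equidistributed on $S_{B}(\langle n\rangle,k)$.

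The main obstacle is the bookkeeping in the second case: one must check that all $2k+1$ insertions yield genuine standard-form type B partitions in $S_{B}(\langle n\rangle,k)$ — which holds exactly because the relevant minima $m_i$ are left unchanged — and, above all, verify carefully that the $2k+1$ increments to the dual major index realize each of $0,1,\dots,2k$ once and only once. This is where one must track how the parity and position of the block receiving $n$ interact with the index shift built into the dual descent multiset, and it is the step that must be carried out with the precise reading of that definition, exactly paralleling the analogous computation for set partitions of $[n]$ in \cite{MR1644459}.
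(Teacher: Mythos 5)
The paper states this theorem with no proof at all (it is followed immediately by the list of open questions), so there is no argument of the paper's to compare yours against; your induction supplies the missing proof, and it is the natural one, exactly parallel to the paper's proofs of the two $los_{B'}$ generating-function theorems: delete $n,\bar n$, and split according to whether they formed singleton blocks (forced to be $S_{2k-1}=\{\bar n\}$, $S_{2k}=\{n\}$ by standardization, giving the $q^{2k}$ factor) or sat in one of the $2k+1$ existing blocks (giving increments $0,1,\dots,2k$ and hence $[2k+1]$). The structure is sound and the sanity check $\widehat{S_{B}}[n,k]=q^{k(k+1)}S_{B}[n,k]$ is a genuinely useful consistency test. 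One caveat you pass over silently: the paper's definition of $\widehat{Des\,\pi}$ is internally inconsistent (the multiset runs over $2,\dots,2k+1$ while $a_{i}$ is defined for $s\in S_{i}$ with blocks indexed only up to $2k$), and it uses the strict inequality $s>m_{i-1}$. Read literally, the singleton block $\{n\}$ would contribute nothing, since its threshold is $m_{2k-1}=\min|\{-n\}|=n$ and $n>n$ fails, which would break the $q^{2k}$ term. Your computation implicitly adopts the convention (block $S_{j}$ carries weight $j$ and the comparison is $s\ge m_{j-1}$, or equivalently the element $n$ always qualifies) under which the theorem is true; you should state that reading explicitly rather than assert that $n$ "meets the threshold set by the preceding block's minimum," since under the paper's literal definition it does not.
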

		Questions:\begin{enumerate}
			\item Does the analogous result follow for dual major index for the set of standard type B partitions as in \cite{MR1644459}?
			\item Is there any way to define analogue of $r-maj$ index for standardized type B partitions, and finding the corresponding generating functions as in \cite{MR1644459} ?
			\item Is there any analogue of $p,q$-Stirling number (first introduced by Wachs and White) of second kind for type B partitions and a way to find the generating function of the corresponding joint distributions as in \cite{MR1644459}?
		\end{enumerate}

Acknowledgment: Author thanks to Bruce Sagan for introducing this area to work on and for providing several suggestions, multiple advice and  helpful discussions on this topic via many communications.
  Author thanks to Jordan O Tirrell towards providing valuable suggestions in order to write this paper.
  Author thanks to Tucker Ervin for providing many great discussions, further ideas, checking the work for corrections, extending helpful advice through several communications.
Author thanks to David Gajewski for helping in LaTeX. This work is partially supported via travel grants from NSF and NSA to travel in conferences in Pattern in Permutations.

\end{document}